\numberwithin{equation}{section}
\newtheorem{theorem}{Theorem}[section]
\newtheorem{lemma}{Lemma}[section]
\newtheorem{proposition}{Proposition}[section]
\newtheorem{conjecture}{Conjecture}[section]
\theoremstyle{definition}
\newtheorem{example}{Example}[section]
\theoremstyle{remark}
\DeclareMathOperator*{\Argmin}{arg\,min}
\DeclareMathOperator{\trace}{tr}
\DeclareMathOperator{\val}{val}
\def\real{\mathbb{R}}
\newcommand{\mainalg}{\text{FO}\xspace}
\newcommand{\gradalg}{\text{GM}\xspace}
\newcommand{\fastgradalg}{\text{FGM}\xspace}
\newcommand{\fastgradalgb}{\text{FGM$^\prime$}\xspace}
\newcommand{\comment}[1]{}
\title{Performance of first-order methods for smooth convex minimization: a novel approach}
\author{Yoel Drori and Marc Teboulle\thanks{School of Mathematical Sciences, Tel-Aviv University, Ramat-Aviv 69978,
Israel ({\tt dyoel@post.tau.ac.il, teboulle@math.tau.ac.il})}}
\begin{document}
\maketitle

\begin{abstract}
We introduce a novel approach for analyzing the performance of first-order black-box
optimization methods. We focus on smooth unconstrained convex minimization over the Euclidean
space $\real^d$. Our approach relies on the observation that by definition, the worst case
behavior of a black-box optimization method is by itself an optimization problem, which we
call the Performance Estimation Problem (PEP). We formulate and analyze the PEP for two
classes of first-order algorithms. We first apply this approach on the classical gradient
method and derive a new and tight analytical bound on its performance. We then consider a
broader class of first-order black-box methods, which among others, include the so-called
heavy-ball method and the fast gradient schemes. We show that for this broader class, it is
possible to derive new numerical bounds on the performance of these methods by solving an
adequately relaxed convex semidefinite PEP. Finally, we show an efficient procedure for
finding optimal step sizes  which results in a first-order black-box method that achieves best
performance.
\medskip

\noindent{\bf Keywords:} Performance of First-Order Algorithms, Rate of Convergence,
Complexity, Smooth Convex Minimization, Duality, Semidefinite Relaxations, Fast Gradient
Schemes, Heavy Ball method.

\end{abstract}

\section{Introduction}
First-order convex optimization methods have recently gained in popularity both in theoretical
optimization and in many scientific applications, such as signal and image processing,
communications, machine learning, and many more. These problems are very large scale, and
first-order methods, which in general involve very cheap and simple computational iterations,
are often the best option to tackle such problems in a reasonable time, when moderate accuracy
solutions are sufficient. For convex optimization problems, there exists an extensive
literature on the development and analysis of first-order methods, and in recent years, this
has been revitalized at a quick pace due to the emergence of many fundamental new applications
alluded above, see e.g., the recent collections \cite{convex-sig-10, opt-ml-11} and references
therein.



This work is not on the development of new algorithms, rather it focuses on the theoretical
performance analysis of first order methods for unconstrained minimization with an objective
function which is known to belong to a given family $\mathcal{F}$ of smooth convex functions
over the Euclidean space $\real^d$, the function itself is not known.

Following the seminal work  of Nemirovski and Yudin \cite{nemi-yudi-book83} in the complexity
analysis of convex optimization methods, we measure the computational cost based on the oracle
model of optimization. According to this model, a \emph{first-order black-box} optimization
method is an algorithm which has knowledge of the underlying space $\real^d$ and the family
$\mathcal{F}$, the function itself is not known. To gain information on the function to be
minimized, the algorithm  queries a first order oracle, that is, a subroutine which given as
input a point in $\real^d$, returns the value of the objective function and its gradient at
that point. The algorithm thus generates a finite sequence of points $\{x_i\in \mathbb{R}^d :
i=0,\dots,N\}$, where at each step the algorithm can depend only on the previous steps, their
function values and gradients via some rule
$$
 x_0 \in \real^d,\; x_{i+1}={\cal A}(x_0,\dots,x_i; f(x_0),\dots,f(x_i); f'(x_0),\dots, f'(x_i)), \; i=0, \ldots, N-1,
$$
where $f'(\cdot)$ stands for the gradient of $f(\cdot)$.  Note that the algorithm has another
implicit knowledge, i.e., that the distance from its initial point $x_0$ to a minimizer
$x_\ast \in X_\ast(f)$ of $f$ is bounded by some constant $R>0$, see more precise definitions
in the next section.

Given a desired accuracy $\varepsilon >0$, applying the given algorithm on the function $f$ in
the class $\mathcal{F}$, the algorithm stops when it produces an approximate solution
$x_\varepsilon$ which is $\varepsilon$-optimal, that is such that $$f(x_\epsilon) - f(x_\ast)
\leq \epsilon.$$ The performance (or complexity) of a first order black-box optimization
algorithm is then measured by the number of oracle calls  the algorithm needs to find such an
approximate solution. Equivalently, we can measure the performance of an algorithm by looking
at the absolute inaccuracy $$ \delta(f,x_N) = f(x_N) - f(x_\ast),$$ where $x_N$ is the result
of the algorithm after making $N$ calls to the oracle. Throughout this paper we will use the
latter form to measure the performance  of a given algorithm.

Building on this  model, in this work  we introduce a novel approach for analyzing the
performance of a given first order scheme. Our approach relies on the observation that by
definition, the worst case behavior of a first-order black-box optimization algorithm
 is by itself an optimization problem which consists of finding the maximal
absolute inaccuracy over all possible inputs to the algorithm.
Thus, with 
$x_N$  being the output
of the algorithm after making $N$ calls to the oracle, 
we look at the solution of the following {\em Performance Estimation Problem} (PEP):
\begin{align*}
    \begin{aligned}
    \max \quad & f(x_N)-f(x_\ast) \\
    \text{s.t. }\quad
                        & f \in \mathcal{F},\\
                        & x_{i+1}={\cal A}(x_0,\dots,x_i; f(x_0),\dots,f(x_i); f'(x_0),\dots, f'(x_i)),
                        \; i=0, \ldots, N-1,\\
                        & x_\ast \in X_\ast(f), \|x_\ast-x_0\|\leq R\\
                        & x_0, \ldots, x_N, x_\ast \in\mathbb{R}^d.
    \end{aligned}
    \tag{P}\label{P:baseproblemIntro}
\end{align*}

At first glance this problem seems very hard or impossible to solve.  We overcome this
difficulty through an analysis that relies on various types of relaxations, including duality
and semi-definite relaxation techniques.
The problem and setting, and an outline of the
underlying idea of the proposed approach for analyzing \eqref{P:baseproblemIntro} are described in Section~\ref{S:mainapproach}.
In order to develop the basic idea and tools underlying our proposed approach, we first focus on the
fundamental gradient method (GM) for smooth convex minimization, and then extend it to a broader class of first order black
box minimization methods.  Obviously, the gradient method is a particular case of this broader class that will be analyzed below.
However,  it is quite important to start with the gradient method for two reasons. First, it allows to acquaint the reader
in a more transparent way with the techniques and methodolgy we need to develop  in order to analyze (PEP),  thus
paving the way to tackle more general schemes. Secondly, for the gradient method,
we are able to prove a new and tight bound on its performance which is given {\em analytically}, see Section~\ref{S:analyticalbound}.
Capitalizing on the methodology and tools developed in the past section,  in Section~\ref{S:numerical_bounds}, we  consider a broader
class of first-order black-box methods, which among others, is shown to include the so-called
heavy-ball  \cite{polyak1964} and fast gradient schemes  \cite {nest-book-04}. Although an
analytical solution is not available for this general case, we show that for this broader
class of methods, it is always possible to compute {\em numerical bounds} for an adequate
relaxation of the corresponding PEP,  allowing to derive new  bounds on the performance of
these methods.  We then derive in Section~\ref{S:optalg} an efficient procedure for finding
optimal step sizes which results in a first-order method that achieves best performance. Our
approach and analysis give rise to some interesting problems leading us to suggest some
conjectures.  Finally, an appendix includes the proof of a technical result.

\paragraph{Notation.} For a differentiable function $f$, its gradient at $x$ is denoted by
$f'(x)$. The Euclidean norm of a vector $x\in\real^d$ is denoted as $\|x\|$.
The set of symmetric matrices in $\real^{n\times n}$ is denoted by $\mathbb{S}^n$.
For two symmetric
matrices $A$ and $B$, $A\succeq B, (A \succ B)$ means $A-B \succeq 0 \,(A-B \succ 0)$ is
positive semidefinite (positive definite).
We use $e_i$ for the $i$-th canonical basis vector in $\mathbb{R}^N$, which consists of all zero components,
except for its $i$-th entry which is equal to one,
and use $\nu$ to denote a unit vector in $\mathbb{R}^d$. For an optimization problem (P), $\val(P)$
stands for its optimal value.

\section{The Problem and the Main Approach}\label{S:mainapproach}

\subsection{The Problem and Basic Assumptions}

Let $\mathcal{A}$ be a first-order algorithm for solving the optimization problem
\begin{align*}
    \begin{aligned}
    (M)\quad \min\{ f(x):\, x \in \real^d\}.
    \end{aligned}
\end{align*}

Throughout the paper we make the following assumptions:
\begin{itemize}
\item $f:\real^d \to \real$  is a convex function of the type $C^{1,1}_L(\real^d)$, i.e.,
continuously differentiable with Lipschitz continuous gradient:
\[
    \|f'(x)-f'(y)\|\leq L \|x-y\|,\,\,\forall x,y\in \real^d,
\]
where $L>0$ is the Lipschitz constant.
\item We assume that (M) is solvable, i.e., the optimal set $X_*(f):=\Argmin f$ is nonempty, and for
$x_*\in X_*(f)$ we set $f^*:=f(x_*).$
\item There exists $R>0$, such that
the distance from $x_0$ to an optimal solution $x_\ast\in X_*(f)$ is bounded by
$R$.\footnote{In general, the terms $L$ and $R$ are unknown or difficult to compute, in which case
some upper bound estimates can be used in place. Note that all currently known complexity
results for first order methods depend on $L$ and $R$.}
\end{itemize}


Given a convex function $f$ in the class $C^{1,1}_L(\real^d)$ and any starting point $x_0 \in
\real^d$, the algorithm $\mathcal{A}$ is a first-order black box scheme, i.e., it is allowed
to access $f$ only through the sequential calls to the first order oracle that returns the
value and the gradient of $f$ at  any input point $x$.   The algorithm  $\mathcal{A}$ then
generates a sequence of points 
$x_i \in \real^d, \;i=0,\ldots, N$.

\subsection{Basic Idea and Main Approach}
We are interested in measuring the worst-case behavior of a given algorithm $\mathcal{A}$ in
terms of the absolute inaccuracy  $f(x_N) - f(x_\ast)$, by solving problem
\eqref{P:baseproblem} defined in the introduction, namely
\begin{align*}
    \begin{aligned}
    \max \quad & f(x_N)-f(x_\ast) \\
    \text{s.t. }\quad
                        & f \in C^{1,1}_L(\mathbb{R}^d) \text{, $f$ is convex},\\
                        & x_{i+1}={\cal A}(x_0,\dots,x_i; f(x_0),\dots,f(x_i); f'(x_0),\dots, f'(x_i)), \; i=0,
                        \ldots, N-1,\\
                        & x_\ast \in X_\ast(f), \|x_\ast-x_0\|\leq R,\\
                        & x_0, \ldots, x_N, x_\ast \in\mathbb{R}^d.
    \end{aligned}
    \tag{P}\label{P:baseproblem}
\end{align*}

To tackle this problem, we suggest to perform a series of relaxations 
thereby reaching a tractable optimization problem. 
\medskip

A main difficulty in problem \eqref{P:baseproblem} lies in the functional constraint
($f$ is a convex function in $C^{1,1}_L(\mathbb{R}^d)$),
i.e., we are facing an abstract hard optimization problem in infinite dimension. To overcome
this difficulty,  the approach taken in this paper is to {\em relax} this constraint so that
the problem can be reduced and formulated as an explicit finite dimensional problem that can
eventually be adequately analyzed.

An un-formal description of the underlying idea consists of two main steps as follows:

\begin{itemize}
\item Given an algorithm ${\cal A}$ that generates a finite sequence of points,
to build a problem in finite dimension we
replace the functional constraint $f\in C^{1,1}_L$ in \eqref{P:baseproblem} by new variables in $\real^d$. These
variables, are the points $\{x_0, x_1, \ldots x_N, x_\ast \}$ themselves, the function values
and their gradients at these points. Roughly speaking, this can be seen as a sort of
discretization of $f$ at a selected set of points.
\item To define constraints that relate the new variables, we use relevant/useful
properties characterizing the family of convex functions in $C^{1,1}_L$, as well as the
rule(s) describing the given algorithm ${\cal A}$.
\end{itemize}

This approach can, in principle, be applied to any optimization algorithm. Note that any
relaxation performed on the maximization problem \eqref{P:baseproblem} may increase its
optimal value, however, the optimal value of the relaxed problem still remains a valid upper
bound on $f(x_N)-f^\ast$.

A formal description on how this approach can be applied to the gradient method is described
in the next section, which as we shall see, allows us to derive a new tight bound on the
performance of the gradient method.

\section{An Analytical Bound for the Gradient Method}\label{S:analyticalbound}

 To develop the basic idea and tools underlying the proposed approach
for analyzing the performance of iterative optimization algorithms, in this section we focus
on the simplest fundamental method for smooth convex minimization, the {\em Gradient Method}
(GM). It will also pave the way to tackle more general first-order schemes as developed in the
forthcoming sections.

\subsection{A Performance Estimation Problem for the Gradient Method}
Consider the gradient algorithm with constant step size, as applied to problem $(M)$, which generates a sequence of
points as follows:\medskip

\fbox{\parbox{14cm}{ {\bf Algorithm GM} \medskip
\begin{enumerate}
\setcounter{enumi}{-1}
\item Input: $f\in C^{1,1}_L(\mathbb{R}^d)$ convex, $x_0\in \mathbb{R}^d.$
\item For $i=0,\dots,N-1$, compute $x_{i+1} = x_i - \frac{h}{L} f^\prime(x_i)$.
\end{enumerate}
}}
\bigskip

Here $h>0$ is fixed.
At this point, we recall that for $h=1$, the convergence rate of the GM algorithm can be shown
to be (see for example \cite{nest-book-04,beck-tebo-fista09}):
\begin{equation}\label{E:oldgradientbound}
    f(x_N)-f^\ast \leq \frac{L\|x_0-x_\ast\|^2}{2N}, \quad \forall x_\ast \in X_\ast(f).
\end{equation}
\medskip

To  begin our analysis, we first recall a fundamental well-known property for the class of
convex $C^{1,1}_L$ functions, see e.g., \cite[Theorem~2.1.5]{nest-book-04}.

\begin{proposition}\label{prop:lip} Let $f:\real^d \to \real$ be convex and $C^{1,1}_L$. Then,
\begin{equation}\label{E:basicconvexprop}
    \frac{1}{2L} \|f^\prime(x)-f^\prime(y)\|^2 \leq f(x)-f(y)-\langle f^\prime(y), x-y\rangle, \;\mbox{for all}
    \; x,y\in \mathbb{R}^d.
\end{equation}
\end{proposition}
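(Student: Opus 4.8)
The plan is to derive \eqref{E:basicconvexprop} from the elementary descent lemma applied to a well-chosen auxiliary function whose minimizer is known explicitly.

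First I would record the descent lemma: for any $g\in C^{1,1}_L(\real^d)$ and any $u,v\in\real^d$,
\[
  g(u)\le g(v)+\langle g'(v),u-v\rangle+\tfrac{L}{2}\|u-v\|^2 .
\]
This follows by writing $g(u)-g(v)=\int_0^1\langle g'(v+t(u-v)),u-v\rangle\,dt$, subtracting $\langle g'(v),u-v\rangle$ from both sides, and bounding the resulting integrand by $\|g'(v+t(u-v))-g'(v)\|\,\|u-v\|\le Lt\|u-v\|^2$ via Cauchy--Schwarz and the Lipschitz hypothesis. Note this step uses smoothness only, not convexity.

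Next, fix $y\in\real^d$ and set $\phi(z):=f(z)-\langle f'(y),z\rangle$. Then $\phi$ is convex (a convex function minus a linear one), and $\phi\in C^{1,1}_L$ with the same constant $L$ (subtracting a linear term shifts the gradient by a constant, so gradient differences are unchanged), with $\phi'(z)=f'(z)-f'(y)$. In particular $\phi'(y)=0$, so by convexity $y$ is a global minimizer of $\phi$. Applying the descent lemma to $g=\phi$ with $v=z$ and $u=z-\tfrac1L\phi'(z)$ (one gradient step from $z$) gives
\[
  \phi(y)\;\le\;\phi\!\Big(z-\tfrac1L\phi'(z)\Big)\;\le\;\phi(z)-\tfrac1L\|\phi'(z)\|^2+\tfrac{L}{2}\cdot\tfrac1{L^2}\|\phi'(z)\|^2\;=\;\phi(z)-\tfrac1{2L}\|\phi'(z)\|^2 .
\]
Setting $z=x$ and substituting $\phi(x)=f(x)-\langle f'(y),x\rangle$, $\phi(y)=f(y)-\langle f'(y),y\rangle$, $\phi'(x)=f'(x)-f'(y)$, then rearranging, yields exactly \eqref{E:basicconvexprop}.

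There is no genuine obstacle here: the only substantive points are the integration producing the descent lemma and the idea of passing to $\phi$ so that a gradient step from $x$ lands—after using minimality—at the comparison point $y$. Alternatively, since the statement is quoted as \cite[Theorem~2.1.5]{nest-book-04}, one could simply invoke that reference, but the short argument above keeps the development self-contained.
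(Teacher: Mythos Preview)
Your argument is correct: the descent lemma plus the auxiliary function $\phi(z)=f(z)-\langle f'(y),z\rangle$ is exactly the standard route to this inequality, and your computations are clean. The paper itself does not prove the proposition at all---it simply states it and refers the reader to \cite[Theorem~2.1.5]{nest-book-04}; your self-contained proof is essentially the argument found in that reference, so there is nothing to contrast.
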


Let $x_0\in \real^d$ be any starting point, let $\{x_1,\dots,x_N \}$ be the points generated
by Algorithm (GM) and let $x_\ast$ be a minimizer of $f$. Applying \eqref{E:basicconvexprop}
on the points $\{x_0,\dots,x_N, x_\ast\}$, we get:
\begin{align}\label{E:basicconstraints}
    & \frac{1}{2L} \|f^\prime(x_i)-f^\prime(x_j)\|^2 \leq f(x_i)-f(x_j)-\langle f^\prime(x_j), x_i-x_j\rangle, \quad i,j=0,\dots,N,\ast.
\end{align}
Now define
\begin{align*}
    \delta_i &:= \frac{1}{L\|x_\ast-x_0\|^2}(f(x_i)-f(x_\ast)),\quad i=0,\dots,N, \ast \\
    g_i &:= \frac{1}{L\|x_\ast-x_0\|}f^\prime(x_i),\quad i=0,\dots,N, \ast
\end{align*}
and note that  we always have $\delta_\ast=0$ and $g_\ast=0$.

In terms of $\delta_i$, $g_i$,
condition \eqref{E:basicconstraints} becomes
\begin{equation}\label{carf}
    \frac{1}{2}\| g_i - g_j \| ^2 \leq \delta_i - \delta_j - \frac{\langle g_j, x_i-x_j
    \rangle}{\|x_\ast-x_0\|}, \quad i,j=0,\dots,N,\ast,
\end{equation}
and the recurrence defining (GM) reads:
$$
x_{i+1}=x_i - h \|x_\ast -x_0\| g_i, \quad i=0, \ldots , N-1 .
$$

 Problem \eqref{P:baseproblem} can now be
relaxed by \emph{discarding} the underlying function $f \in C^{1,1}_L$ in
\eqref{P:baseproblem}. That is, the constraint in the function space $f\in C^{1,1}_L$ with $f$
convex, is replaced by the inequalities (\ref{carf}) characterizing this family of functions
and expressed in terms of the variables $x_0,\dots,x_N,x_\ast\in\mathbb{R}^d$,
$g_0,\dots,g_N\in\mathbb{R}^d$ and $\delta_0,\dots,\delta_N\in\mathbb{R}$ generated by (GM).
Thus, an upper bound on the worst case behavior of $f(x_N)-f(x_\ast)= L \|x_\ast -x_0\|^2
\delta_N$
 can be obtained by solving the following relaxed PEP:
\begin{align*}
    \max_{\substack{x_0,\dots,x_N,x_\ast\in\mathbb{R}^d,\\g_0,\dots,g_N\in\mathbb{R}^d,\\\delta_0,\dots,\delta_N \in\mathbb{R}}}\ & L\|x_\ast-x_0\|^2 \delta_N \\
    \text{s.t. }\quad
                        & x_{i+1} = x_i - h \|x_\ast-x_0\| g_i,\quad i=0,\dots,N-1,\\
                        & \frac{1}{2}\| g_i - g_j \| ^2 \leq \delta_i - \delta_j - \frac{ \langle g_j, x_i-x_j \rangle}{\|x_\ast-x_0\|},\quad  i,j=0,\dots,N,\ast, \\
                        &\|x_\ast-x_0\|\leq R.
\end{align*}

\paragraph{Simplifying the PEP} The obtained problem remains nontrivial to tackle. We will now
perform some simplifications on this problem that will be useful for the forthcoming analysis.

First, we observe that the problem is invariant under the transformation $g_i^\prime
\leftarrow Q g_i$, $x_i^\prime \leftarrow Q x_i$ for any orthogonal transformation $Q$. We can
therefore assume without loss of generality that $x_\ast- x_0= \|x_\ast-x_0\|\nu$, where $\nu$
is any given unit vector in $\mathbb{R}^d$. Therefore, for $i=\ast$ the inequality constraints
reads
\[
    \frac{1}{2}\| g_\ast - g_j \| ^2 \leq \delta_\ast - \delta_j - \frac{ \langle g_j, \|x_\ast-x_0\|\nu+x_0-x_j \rangle}{\|x_\ast-x_0\|},\quad     j=0,\dots,N.
\]
Secondly,
we consider \eqref{carf} for the four cases $i=\ast$, $j=\ast$, $i<j$ and $j<i$,
and use the equality constraints
$$
x_{i+1} = x_i - h \|x_\ast-x_0\| g_i,\quad i=0,\dots,N-1
$$
to eliminate the variables $x_1,\dots,x_N$.
After some algebra, we reach the following form for the PEP:
\begin{align*}
    \begin{aligned}
    \max_{x_0,x_\ast,g_i \in\mathbb{R}^d,\delta_i \in\mathbb{R}}\ & L\|x_\ast-x_0\|^2 \delta_N \\
    \text{s.t. }
                        & \frac{1}{2} \| g_i - g_j \| ^2 \leq \delta_i - \delta_j - \langle g_j, \sum_{t=i+1}^{j} h g_{t-1} \rangle,\quad i<j=0,\dots,N, \\
                        & \frac{1}{2} \| g_i - g_j \| ^2 \leq \delta_i - \delta_j + \langle g_j, \sum_{t=j+1}^{i} h g_{t-1} \rangle,\quad j<i=0,\dots,N, \\
                        & \frac{1}{2} \| g_i \| ^2 \leq \delta_i,\quad                                                                                          i=0,\dots,N, \\
                        & \frac{1}{2} \| g_i \| ^2 \leq - \delta_i - \langle g_i, \nu + \sum_{t=1}^{i} h g_{t-1} \rangle,\quad  i=0,\dots,N, \\
                        & \|x_\ast-x_0\| \leq R,
    \end{aligned}
\end{align*}
where $i<j=0,\dots,N$ is a shorthand notation  for $i=0,\dots,N-1, j= i+1,\dots, N$.

Finally, we note that the optimal solution for this problem is attained when
$\|x_\ast-x_0\|=R$, and hence we can also eliminate the variables $x_0$ and $x_\ast$. This
produces the following PEP for the gradient method, a {\em nonconvex quadratic} minimization
problem:
\begin{align*}
    \begin{aligned}
    \max_{g_i \in\mathbb{R}^d,\delta_i \in\mathbb{R}}\ & LR^2 \delta_N \\
    \text{s.t. }
                        & \frac{1}{2} \| g_i - g_j \| ^2 \leq \delta_i - \delta_j - \langle g_j, \sum_{t=i+1}^{j} h g_{t-1} \rangle,\quad i<j=0,\dots,N, \\
                        & \frac{1}{2} \| g_i - g_j \| ^2 \leq \delta_i - \delta_j + \langle g_j, \sum_{t=j+1}^{i} h g_{t-1} \rangle,\quad j<i=0,\dots,N, \\
                        & \frac{1}{2} \| g_i \| ^2 \leq \delta_i,\quad                                                                                                  i=0,\dots,N, \\
                        & \frac{1}{2} \| g_i \| ^2 \leq - \delta_i - \langle g_i, \nu + \sum_{t=1}^{i} h g_{t-1} \rangle,\quad  i=0,\dots,N. \\
    \end{aligned}
\end{align*}

This problem can be written  in a more compact and useful form. Let $G$ denote the
$(N+1)\times d$ matrix whose rows are $g_0^T,\dots g_N^T$, and  for notational convenience let
$u_i\in\mathbb{R}^{N+1}$ denote the canonical unit vector
\[
    u_i = e_{i+1}, \quad i=0,\dots,N.
\]
Then for any $i,j$, we have
\[
   g_i= G^T u_i,\; \trace (G^T u_i u_j^T G)=\langle g_i, g_j \rangle, \;\mbox{and}\; \langle G^T u_i,\nu \rangle=\langle g_i, \nu\rangle.
\]
Therefore, by defining the following $(N+1)\times (N+1)$ symmetric matrices
\begin{equation}\label{E:gradientmatrices}
\begin{aligned}
    A_{i,j} &= \frac{1}{2} (u_i-u_j)(u_i-u_j)^T + \frac{1}{2}\sum_{t=i+1}^{j} h (u_j u_{t-1}^T+u_{t-1} u_j^T),\\
    B_{i,j} &= \frac{1}{2} (u_i-u_j)(u_i-u_j)^T -  \frac{1}{2}\sum_{t=j+1}^{i} h (u_j u_{t-1}^T+u_{t-1} u_j^T),\\
    C_i &= \frac{1}{2} u_i u_i^T,\\
    D_i &= \frac{1}{2} u_i u_i^T+ \frac{1}{2}\sum_{t=1}^{i} h (u_i u_{t-1}^T + u_{t-1} u_i^T),
\end{aligned}
\end{equation}
we can express our nonconvex quadratic minimization problem in terms of
$\delta:=(\delta_0,\dots,\delta_N) \in \real^{N+1}$
and the new matrix variable $G\in \mathbb{R}^{(N+1)\times d}$ as follows
\begin{align*}
    \begin{aligned}
    \max_{G\in \mathbb{R}^{(N+1)\times d},\delta \in\mathbb{R}^{N+1}}\ &  LR^2 \delta_N \\
    \text{s.t. }
                        & \trace (G^T A_{i,j} G) \leq \delta_i - \delta_j,\quad    i<j=0,\dots,N, \\
                        & \trace(G^T B_{i,j} G) \leq \delta_i - \delta_j,\quad      j<i=0,\dots,N, \\
                        & \trace(G^T C_i G) \leq \delta_i,\quad                             i=0,\dots,N, \\
                        & \trace(G^T D_i G+ \nu u_i^T G ) \leq - \delta_i,\quad     i=0,\dots,N.
    \end{aligned}
    \tag{G}\label{P:gbound}
\end{align*}

Problem (G) is a nonhomogeneous {\em Quadratic Matrix Program}, a class of problems recently
introduced and studied by Beck \cite{beck-qm06}.

\subsection{A Tight Performance Estimate for the Gradient Method}\label{sub-tight}


We now proceed to establish the two main results of this section. First, we derive an upper
bound on the performance of the gradient method, this is accomplished via using duality
arguments. Then, we show that this bound can actually be attained by applying the gradient
method on a specific convex function in the class $C^{1,1}_L$.

In order to simplify
the following analysis,
we will remove some constraints from \eqref{P:gbound} and consider
the bound produced by the following relaxed problem:
\begin{align*}
    \begin{aligned}
    \max_{G\in \mathbb{R}^{(N+1)\times d},\delta \in\mathbb{R}^{N+1}}\ &  LR^2 \delta_N \\
    \text{s.t. }
                        & \trace (G^T A_{i-1,i} G) \leq \delta_{i-1} - \delta_i,\quad    i=1,\dots,N, \\
                        & \trace(G^T D_i G+ \nu u_i^T G ) \leq - \delta_i,\quad     i=0,\dots,N.
    \end{aligned}
    \tag{G$^\prime$}\label{P:gpr}
\end{align*}
As we shall show below, it turns out that this additional relaxation has no
damaging effects and produces the desired performance bound when $0< h\leq 1$.
\medskip

We are interested in deriving a  dual problem for \eqref{P:gpr} which is as simple as
possible, especially with respect to its dimension. As noted earlier, problem \eqref{P:gpr} is
a nonhomogeneous quadratic matrix program, and a dual problem for \eqref{P:gpr} could be
directly obtained by applying the results developed by Beck \cite{beck-qm06}. However, the
resulting obtained dual will involve an additional matrix variable $\Phi \in \mathbb{S}^d$,
where $d$ can be very large. Instead, here by exploiting the special structure of the second
set of nonhomogeneous inequalities given in \eqref{P:gpr}, we derive an alternative  dual
problem, but with only one additional variable $t \in \real$.

To establish our dual result, the next lemma shows that a dimension reduction is possible when
minimizing  a quadratic matrix function sharing the special form as the one that appears in
problem \eqref{P:gpr}.

\begin{lemma}\label{L:quadlemma}
Let $f(X)=\trace (X^T Q X + 2 ba^T X)$ be a quadratic function, where $X\in
\mathbb{R}^{n\times m}$, $Q\in \mathbb{S}^n$, $a\in \mathbb{R}^n$ and $0\neq b\in
\mathbb{R}^m$. Then
\[
    \inf_{X\in \mathbb{R}^{n\times m}} f(X) = \inf_{\xi\in \mathbb{R}^n} f(\xi b^T).
\]
\end{lemma}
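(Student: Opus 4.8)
The plan is to exploit the rank‑one structure of the inhomogeneous term $2ba^{T}X$ together with the observation that $f$ only ``feels'' $X$ through the component of its columns lying along the direction determined by $b$. Concretely, I would split every $X\in\mathbb{R}^{n\times m}$ as
\[
  X=\xi b^{T}+Y,\qquad \xi:=\frac{Xb}{\|b\|^{2}}\in\mathbb{R}^{n},\qquad Y:=X\!\left(I_{m}-\frac{bb^{T}}{\|b\|^{2}}\right),
\]
so that $Yb=0$ and $\xi b^{T}$ is precisely the ``$Y=0$'' member of the family $\{\zeta b^{T}:\zeta\in\mathbb{R}^{n}\}$ over which the right‑hand infimum is taken.

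Next I would substitute this decomposition into $f$ and expand using cyclicity of the trace. The quadratic term $\trace(X^{T}QX)$ yields the two diagonal contributions $\|b\|^{2}\xi^{T}Q\xi$ and $\trace(Y^{T}QY)$, plus two cross terms of the form $\trace(b\,\xi^{T}QY)$ and its transpose; each cross term reduces (by the trace identity $\trace(b\,\xi^{T}QY)=\xi^{T}QYb$) to something proportional to $Yb=0$ and hence vanishes. Similarly $\trace(2ba^{T}X)=2a^{T}Xb=2\|b\|^{2}a^{T}\xi$ since $a^{T}Yb=0$. Collecting terms gives the key identity
\[
  f(X)=\|b\|^{2}\bigl(\xi^{T}Q\xi+2a^{T}\xi\bigr)+\trace(Y^{T}QY)=f(\xi b^{T})+\trace(Y^{T}QY).
\]

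From this identity the lemma follows by a short case distinction. If $Q\succeq0$, then $\trace(Y^{T}QY)\ge0$ for every $Y$, hence $f(X)\ge f(\xi b^{T})\ge\inf_{\zeta\in\mathbb{R}^{n}}f(\zeta b^{T})$; taking the infimum over all $X$ gives one inequality, while the reverse is immediate because $\{\zeta b^{T}:\zeta\in\mathbb{R}^{n}\}\subseteq\mathbb{R}^{n\times m}$. If $Q\not\succeq0$, choose $v$ with $v^{T}Qv<0$; then already $f(\zeta b^{T})=\|b\|^{2}(\zeta^{T}Q\zeta+2a^{T}\zeta)\to-\infty$ along $\zeta=tv$, so the right‑hand infimum is $-\infty$, and the left‑hand infimum, being no larger, is $-\infty$ as well. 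In both cases the two infima agree.

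The computation is essentially bookkeeping, and I do not expect a real obstacle. The two points that need care are: (i) verifying that all cross terms genuinely cancel — this is exactly where $Yb=0$ (and the symmetry of $Q$) enter; and (ii) handling the boundary behaviour cleanly — when $Q$ is indefinite one must argue that \emph{both} sides equal $-\infty$ rather than hunting for a minimizer, and one should note that the statement is trivial in the degenerate case $m=1$, where $\xi b^{T}$ already ranges over all of $\mathbb{R}^{n\times m}$.
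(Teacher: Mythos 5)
Your proof is correct, and it takes a genuinely different route from the paper's. The paper argues through first-order optimality: it recalls that each infimum is finite iff $Q\succeq 0$ and the corresponding stationarity equation is solvable ($Q\bar X+ab^T=0$ for the unrestricted problem, $Q\bar\xi+a=0$ for the restricted one), and then shows by an algebraic manipulation with these two identities that $f(\bar X)-f(\bar\xi b^T)=\trace\bigl(-\bar X^TQ(\bar X-\bar\xi b^T)\bigr)=0$. You instead establish the pointwise decomposition $f(X)=f(\xi b^T)+\trace(Y^TQY)$ with $\xi=Xb/\|b\|^2$ and $Yb=0$, from which the equality of infima follows by monotonicity in the PSD case and by unboundedness along a negative-curvature direction otherwise. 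Your route is more self-contained: it does not invoke the characterization of boundedness of convex quadratics, and it handles the $-\infty$ cases explicitly, whereas the paper leaves implicit the (true, since $b\neq 0$ forces both solvability conditions to reduce to $a\in\range(Q)$) fact that the two infima are finite under exactly the same conditions. The paper's argument, on the other hand, directly exhibits the common optimal value $\trace(ba^T\bar X)$ when the infima are attained, which is mildly more informative. Both proofs are valid; yours is arguably the cleaner one.
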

\begin{proof}
First, we recall (this can be easily verified) that $\inf \{f(X): X \in \real^{n\times m}\}
>-\infty$ if and only if $Q \succeq 0$, and there exists at least one solution ${\bar X}$ such
that
\begin{equation}\label{s1}
 Q{\bar X}+ ab^T=0 \;
\Leftrightarrow\; {\bar X}^TQ +ba^T=0,
\end{equation}
i.e., the above is just $\nabla f (X)= 0$ and characterizes the minimizers of the convex
function $f(X)$. Using (\ref{s1}) it follows that $\inf_X f(X)=f({\bar X})=\trace (ba^T{\bar
X})$.
 Now, for any $\xi \in \mathbb{R}^n$, we have $f(\xi b^T)=\|b\|^2(\xi^TQ\xi + 2a^T\xi)$. Thus, likewise,
 $\inf\{f(\xi b^T): \xi \in \real^n\} >-\infty$ if and only if $Q \succeq 0$ and there exists ${\bar \xi}\in \real^n$ such
that \begin{equation}\label{s2} Q{\bar \xi} + a=0,\end{equation} and using (\ref{s2}) it
follows $\inf_\xi f(\xi b^T)=f({\bar \xi}b^T)=\|b\|^2a^T{\bar \xi}=\trace (ba^T{\bar
\xi}b^T)$. Now, using (\ref{s1})-(\ref{s2}), one obtains ${\bar X}^TQ=-ba^T$ and $Q({\bar X}-
{\bar \xi}b^T)=0$, and hence it follows that
\begin{eqnarray*}
f({\bar X})-f({\bar \xi})&=&\trace (ba^T({\bar X}- {\bar \xi}b^T))\\
                        &=&\trace (-{\bar X}^TQ({\bar X}- {\bar \xi}b^T)=0.
                        \end{eqnarray*}
\end{proof}

Equipped with Lemma \ref{L:quadlemma},  we now derive  a Lagrangian dual for problem
\eqref{P:gpr}.

\begin{lemma}\label{lem:dual} Consider problem \eqref{P:gpr} for any fixed $h\in \real$ and  $L,R >0$.
A Lagrangian dual of \eqref{P:gpr} is given by the following convex program:
\begin{align*}
    \begin{aligned}
        \min_{\lambda \in \real^N, t \in \real}\{\frac{1}{2} LR^2 t:\; \lambda \in \Lambda, \; S(\lambda, t) \succeq 0 \},
    \end{aligned}
    \tag{DG$^\prime$}\label{P:dualgpr}
\end{align*}
where $\Lambda:=\{\lambda \in \real^N:\; \lambda_{i+1}-\lambda_i \geq 0, \quad
i=1,\dots,N-1,\; 1-\lambda_N \geq 0,\; \lambda_i \geq 0,\quad i=1,\dots,N\}$,
the matrix
$S(\cdot,\cdot)\in \mathbb{S}^{N+2}$ is given by \[
    S(\lambda,t)=\begin{pmatrix}
                (1-h)S_0 + h S_1    & q \\
                q^T     & t
        \end{pmatrix},
\]
with $q=(\lambda_1,\lambda_2-\lambda_1,\dots,\lambda_N-\lambda_{N-1},1-\lambda_N)^T$ and where
the matrices $S_0, S_1\in \mathbb{S}^{N+1}$ are  defined by: {\small
\begin{equation}\label{b0matrix}
    S_0=
    \begin{pmatrix}
        2 \lambda_1     & -\lambda_1            \\
        -\lambda_1          & 2 \lambda_2   & -\lambda_2                \\
            &       -\lambda_2          & 2 \lambda_3   & -\lambda_3                \\
                                    &        & \ddots        &\ddots &   \ddots\\
                                            & &                 &       -\lambda_{N-1}  & 2 \lambda_N & -\lambda_N \\
                                            & &                 &        &  -\lambda_N  & 1\\
    \end{pmatrix}
\end{equation}
}
 and
{\small {\begin{equation}\label{b1matrix}
    S_1=
        \begin{pmatrix}
        2 \lambda_1         & \lambda_2-\lambda_1           &\dots          &\lambda_N-\lambda_{N-1}    &1-\lambda_N  \\
        \lambda_2-\lambda_1         & 2 \lambda_2                   &           &\lambda_N-\lambda_{N-1}    &1-\lambda_N \\

        \vdots & & \ddots & &\vdots \\
        \lambda_N-\lambda_{N-1}     & \lambda_N-\lambda_{N-1}   &       & 2\lambda_N    & 1-\lambda_N  \\
        1-\lambda_N         & 1-\lambda_N       & \dots         & 1-\lambda_N   & 1 \\
    \end{pmatrix}
    .
\end{equation}}}
\end{lemma}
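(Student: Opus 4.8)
The plan is to obtain \eqref{P:dualgpr} as the ordinary Lagrangian dual of \eqref{P:gpr}, eliminating first the free variables $\delta$, then the matrix variable $G$ by means of Lemma~\ref{L:quadlemma}, and finally re-expressing the resulting value through a Schur complement. Since the objective of \eqref{P:gpr} is $LR^2$ times $\delta_N$ and the constraints involve neither $L$ nor $R$, it is convenient (and, after a rescaling of the multipliers, equivalent) to first dualize $\max\{\delta_N:\ \trace(G^TA_{i-1,i}G)\le\delta_{i-1}-\delta_i,\ \trace(G^TD_iG+\nu u_i^TG)\le-\delta_i\}$ and to multiply the dual value by $LR^2$ at the end. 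Attaching multipliers $\lambda_i\ge0$ ($i=1,\dots,N$) to the first group of inequalities and $\tau_i\ge0$ ($i=0,\dots,N$) to the second, and forming the Lagrangian, the supremum over $\delta\in\real^{N+1}$ is finite exactly when the coefficient of every $\delta_k$ vanishes; these $N+1$ linear equations pin $\tau$ down affinely from $\lambda$, namely $\tau_0=\lambda_1$, $\tau_k=\lambda_{k+1}-\lambda_k$ ($k=1,\dots,N-1$) and $\tau_N=1-\lambda_N$. The remaining sign constraints $\tau_i\ge0$, $\lambda_i\ge0$ are then precisely the inequalities defining $\Lambda$, and $(\tau_0,\dots,\tau_N)$ is the vector $q$ appearing in the statement.

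Once $\delta$ has been removed, what is left of the Lagrangian is a quadratic-in-$G$ expression $-\trace(G^TQG+\nu a^TG)$, where $Q:=\sum_{i=1}^N\lambda_iA_{i-1,i}+\sum_{i=0}^N\tau_iD_i\in\mathbb{S}^{N+1}$ and $a:=\sum_{i=0}^N\tau_iu_i=q$. Because $\nu$ is a unit vector, Lemma~\ref{L:quadlemma} applies with $b=\nu$ and reduces the supremum over all of $\real^{(N+1)\times d}$ to a one-dimensional minimization,
\[
 \sup_{G}\big\{-\trace(G^TQG+\nu a^TG)\big\}=-\inf_{\xi\in\real^{N+1}}\big\{\xi^TQ\xi+a^T\xi\big\},
\]
whose right-hand side equals $\tfrac14 a^TQ^+a$ when $Q\succeq0$ and $a\in\range(Q)$, and $+\infty$ otherwise. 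Hence the dual function is $+\infty$ off the set $\{Q\succeq0,\ a\in\range(Q)\}$ and equals $\tfrac14 a^TQ^+a$ on it.

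I expect the main computational obstacle to be the verification of the identity $Q=\tfrac12\big[(1-h)S_0+hS_1\big]$. Since every $A_{i-1,i}$ and every $D_i$ is affine in $h$, so is $Q$, and it therefore suffices to match the entries of $Q$ against those of $\tfrac12 S_0$ at $h=0$ (where the $h$-dependent cross terms drop out and $Q$ collapses to a multiple of the tridiagonal matrix $S_0$) and against those of $\tfrac12 S_1$ at $h=1$; both computations are elementary but demand careful bookkeeping of the rank-one blocks $u_iu_j^T$ occurring in $A_{i-1,i}$ and $D_i$ and of the substitution $\tau(\lambda)$ found above.

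Granting the identity, write $M:=(1-h)S_0+hS_1$, so $Q=\tfrac12 M$, and since $a=q$ one gets $\tfrac14 a^TQ^+a=\tfrac12\, q^TM^+q$. Therefore the Lagrangian dual of $\max\{\delta_N:\dots\}$ is $\min_{\lambda\in\Lambda}\tfrac12\, q^TM^+q$, with the implicit requirements $M\succeq0$ and $q\in\range(M)$. Introducing an epigraph variable $t$ and using the Schur complement criterion — the block matrix $\left(\begin{smallmatrix}M&q\\ q^T&t\end{smallmatrix}\right)$ is positive semidefinite iff $M\succeq0$, $q\in\range(M)$, and $t\ge q^TM^+q$ — turns this into $\min\{\tfrac12 t:\ \lambda\in\Lambda,\ S(\lambda,t)\succeq0\}$, because $S(\lambda,t)$ is exactly that block matrix. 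Multiplying the value back by $LR^2$ produces \eqref{P:dualgpr}, which is a convex program: its objective is linear in $(\lambda,t)$, $\Lambda$ is polyhedral, and $S(\lambda,t)\succeq0$ is a linear matrix inequality in $(\lambda,t)$.
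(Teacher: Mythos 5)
Your proposal is correct and follows essentially the same route as the paper's proof: dualize with multipliers $(\lambda,\tau)$, eliminate $\delta$ to obtain the affine relations $\tau_0=\lambda_1$, $\tau_i=\lambda_{i+1}-\lambda_i$, $\tau_N=1-\lambda_N$ (hence $\Lambda$ and $q$), reduce the minimization over $G$ to a vector via Lemma~\ref{L:quadlemma} with $b=\nu$, and convert the resulting quadratic value into the LMI on $S(\lambda,t)$. The only cosmetic difference is at the final step, where the paper invokes the ``nonnegative quadratic $\Leftrightarrow$ PSD block matrix'' lemma directly while you compute the infimum as $-\tfrac14 q^TQ^+q$ and then apply the generalized Schur complement — two equivalent formulations of the same fact.
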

\begin{proof}
For convenience,  we recast \eqref{P:gpr} as a minimization problem, and we also omit the
fixed term $LR^2$ from the objective. That is, we consider the equivalent problem \eqref{P:agpr}
defined by
\begin{align*}
    \begin{aligned}
    \min_{G\in \mathbb{R}^{(N+1)\times d},\delta \in\mathbb{R}^{N+1}}\ &  -\delta_N \\
    \text{s.t. }
                        & \trace (G^T A_{i-1,i} G) \leq \delta_{i-1} - \delta_i,\quad    i=1,\dots,N, \\
                        & \trace(G^T D_i G+ \nu u_i^T G ) \leq - \delta_i,\quad     i=0,\dots,N.
    \end{aligned}
    \tag{G$^{\prime\prime}$}\label{P:agpr}
\end{align*}

Attaching the dual multipliers $\lambda=(\lambda_1,\dots,\lambda_N) \in \real^N_{+}$ and
$\tau:=(\tau_0, \ldots ,\tau_N)^T \in \real^{N+1}_{+}$ to the first and second set of
inequalities respectively, and using the notation $\delta=(\delta_0, \ldots ,\delta_N)$, we
get that the Lagrangian of this problem is given as a sum of two separable functions in the
variables $(\delta, G)$:
\begin{eqnarray*}
    L(G,\delta, \lambda, \tau)&=&
         -\delta_N+\sum_{i=1}^{N} \lambda_i(\delta_i - \delta_{i-1}) + \sum_{i=0}^N \tau_i \delta_i \\
         & &+ \sum_{i=1}^{N} \lambda_i \trace(G^T A_{i-1,i} G)+ \sum_{i=0}^N \tau_i
        \left( \trace(G^T D_i G+ \nu u_i^T G) \right)\\
        & \equiv & L_1(\delta, \lambda, \tau)+ L_2(G,\lambda, \tau).
\end{eqnarray*}
The dual objective function is then defined by $$H(\lambda, \tau)=\min_{G, \delta} L(G,\delta,
\lambda\, \tau)=\min_\delta L_1(\delta, \lambda, \tau)+ \min_{G} L_2(G, \lambda, \tau),$$ and
the dual problem of \eqref{P:agpr} is then given by
\begin{align*}
    \begin{aligned}
        \max \{ H(\lambda, \tau):\; \lambda \in\real^{N}_{+}, \tau \in \real^{N+1}_{+}\}.
    \end{aligned}
    \tag{DG$^{\prime\prime}$}\label{P:dualagpr}
\end{align*}
Since $L_1(\cdot,\lambda, \tau)$ is linear in $\delta$, we have $\min_\delta
L_1(\delta,\lambda, \tau) =0$ whenever
\begin{eqnarray}
            -\lambda_1 +\tau_0 &= & 0, \nonumber\\
            \lambda_i-\lambda_{i+1}+\tau_i &=& 0\quad (i=1,\dots,N-1), \label{eqtau}\\
             -1+\lambda_N+\tau_N &=& 0, \nonumber
            \end{eqnarray}
            and $-\infty$ otherwise. Invoking Lemma~\ref{L:quadlemma}, we get
\begin{align*}
     \min_{G\in\mathbb{R}^{(N+1)\times d}} L_2(G,\lambda, \tau) =
     \min_{w\in\mathbb{R}^{N+1}} L_2(w \nu^T,\lambda, \tau).
\end{align*}
Therefore for any $(\lambda,\tau)$ satisfying (\ref{eqtau}), we have obtained that the dual
objective reduces to
\begin{eqnarray*}
 H(\lambda, \tau) &=& \min_{w \in \real^{N+1}}\{w^T \left(\sum_{i=1}^N \lambda_i  A_{i-1,i}  +
        \sum_{i=0}^N \tau_i  D_i \right) w + \tau^T w \}\\
        &=&  \max_{t\in \mathbb{R}} \{-\frac{1}{2}t :
        w^T \left(\sum_{i=1}^N \lambda_i  A_{i-1,i}  +
        \sum_{i=0}^N \tau_i  D_i \right) w + \tau^T w \leq -\frac{1}{2}t,\ \forall w\in \mathbb{R}^{N+1}
        \}\\
 &=& \max_{t\in \mathbb{R}} \left\{
            -\frac{1}{2}t:
            \begin{pmatrix}
                \sum_{i=1}^N \lambda_i  A_{i-1,i}  + \sum_{i=0}^N \tau_i  D_i  & \frac{1}{2} \tau \\
                \frac{1}{2}\tau^T       & \frac{1}{2}t
        \end{pmatrix} \succeq 0
        \right\}.
\end{eqnarray*}
where the last equality follows from  the well known lemma
\cite[Page~163]{ben2001lectures}\footnote{Let $M$ be a symmetric matrix. Then, $x^TMx + 2 b^Tx+ c \geq 0, \forall x \in
\real^d$ if and only if the matrix $\begin{pmatrix} M & b\\ b^T & c \end{pmatrix}$ is positive
semidefinite.}.

Now, recalling the definition of the matrices $A_{i-1,i}, D_i$ (see
(\ref{E:gradientmatrices})), we obtain {\small
\[
    \sum_{i=1}^N \lambda_i  A_{i-1,i}  =
    \frac{1}{2}\begin{pmatrix}
        \lambda_1       & (h-1)\lambda_1            \\
        (h-1)\lambda_1          & \lambda_1+\lambda_2   & (h-1)\lambda_2                \\
            &       (h-1)\lambda_2          & \lambda_2 +\lambda_3  & (h-1)\lambda_3                \\
                                    &        & \ddots        &\ddots &   \ddots\\
                                            & &                 &       (h-1)\lambda_{N-1}  & \lambda_{N-1}+ \lambda_N & (h-1)\lambda_N \\
                                            & &                 &        &  (h-1)\lambda_N  & \lambda_N\\
    \end{pmatrix}
    \]
    }
and {\small
\[
    \sum_{i=0}^N \tau_i  D_i =
    \frac{1}{2} \begin{pmatrix}
        \tau_0          & h\tau_1           & \dots     &h\tau_{N-1}    &h\tau_N  \\
        h\tau_1             & \tau_1        &               &h\tau_{N-1}    &h\tau_N \\

        \vdots  & & \ddots &     &\vdots \\
        h\tau_{N-1}     & h\tau_{N-1}   &           & \tau_{N-1}        & h\tau_N \\
        h\tau_N         & h\tau_N & \dots       & h\tau_N   & \tau_N \\
    \end{pmatrix}.
\]
}
Finally,  using the relations (\ref{eqtau}) to eliminate $\tau_i$, and recalling that
$\val\!\eqref{P:agpr}$ was defined as $-LR^2 \val\!\eqref{P:gpr}$, the desired form of the stated dual problem
follows.
\end{proof}

The next lemma will be crucial in invoking duality in the forthcoming theorem.
The proof for this lemma is quite technical and appears in the appendix.
\begin{lemma} \label{L:positivedefinitea0a1}
Let
\begin{align*}
    \lambda_i &= \frac{i}{2 N+1-i},\qquad i=1,\dots,N,
\end{align*}
then the matrices $S_0, S_1 \in \mathbb{S}^{N+1}$ defined in (\ref{b0matrix})--(\ref{b1matrix})
are positive definite for every $N\in\mathbb{N}$.
\end{lemma}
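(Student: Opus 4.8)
The plan is to prove the two assertions separately, each by showing that the corresponding quadratic form is positive on $\real^{N+1}\setminus\{0\}$. It is convenient to first record the only facts about the sequence that will be used: writing $\lambda_0:=0$, $\lambda_{N+1}:=1$ and $D_m:=\lambda_m-\lambda_{m-1}$, from $\lambda_i=i/(2N+1-i)$ one gets $\lambda_1=\tfrac1{2N}>0$, that $\lambda_1<\lambda_2<\dots<\lambda_N$ (the numerator increases and the denominator decreases), that $\lambda_N=\tfrac N{N+1}<1$, and the closed form $D_m=\tfrac{2N+1}{(2N+1-m)(2N+2-m)}$ for $1\le m\le N$ together with $D_{N+1}=\tfrac1{N+1}$; in particular $D_1<D_2<\dots<D_N$ while $D_N\ge D_{N+1}$.

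\emph{The matrix $S_0$.} Since $S_0$ is tridiagonal with diagonal $(2\lambda_1,\dots,2\lambda_N,1)$ and sub/super\-diagonal entries $-\lambda_k$, completing the square by absorbing each cross term $-2\lambda_k v_kv_{k+1}$ into $\lambda_k v_k^2+\lambda_k v_{k+1}^2$ yields, after collecting the leftover diagonal,
\[
 v^\top S_0 v=\sum_{k=1}^{N}\lambda_k\,(v_k-v_{k+1})^2+\lambda_1 v_1^2+\sum_{k=2}^{N}(\lambda_k-\lambda_{k-1})\,v_k^2+(1-\lambda_N)\,v_{N+1}^2 .
\]
Every coefficient on the right is nonnegative (the $\lambda_k$ are positive and nondecreasing and $\lambda_N\le1$) and $\lambda_1>0$; if the right-hand side vanishes then $v_k=v_{k+1}$ for all $k$ and $v_1=0$, hence $v=0$. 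So $S_0\succ0$. This step in fact uses only $0<\lambda_1\le\dots\le\lambda_N\le1$, not the precise formula.

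\emph{The matrix $S_1$.} This is the technical core, and here the specific values matter: the off-diagonal $(i,j)$ entry of $S_1$ is $D_{\max(i,j)}$, so $S_1$ is a ``$\max$-type'' matrix plus a diagonal, and such matrices are in general not positive semidefinite. The route I would take is Sylvester's criterion organized around the observation that the leading $n\times n$ principal submatrix $S_1^{(n)}$ is $S_1^{(n-1)}$ bordered by the constant vector $D_n\mathbf 1$: thus $S_1^{(n)}\succ0$ iff $S_1^{(n-1)}\succ0$ and the pivot $p_n:=(S_1)_{nn}-D_n^2\,\mathbf 1^\top(S_1^{(n-1)})^{-1}\mathbf 1$ is positive. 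This reduces everything to the scalar quantity $c_n:=\mathbf 1^\top(S_1^{(n)})^{-1}\mathbf 1$, for which the bordering gives the Levinson-type recursion $c_n=c_{n-1}+(1-D_n c_{n-1})^2/p_n$ with $p_n=(S_1)_{nn}-D_n^2 c_{n-1}$; the induction on $n=1,\dots,N+1$ then amounts to feeding in the closed forms for $\lambda_n,D_n$ and checking that the running value always satisfies $c_{n-1}<(S_1)_{nn}/D_n^2$, i.e. $p_n>0$. A fully explicit alternative is a direct completion of squares: with $P_k:=v_1+\dots+v_k$ one can write
\[
 v^\top S_1 v=(3\lambda_1-\lambda_2)\,v_1^2+\sum_{k=2}^{N}(\lambda_k+\lambda_{k-1})\,v_k^2+\lambda_N v_{N+1}^2+(1-\lambda_N)\,P_{N+1}^2+\sum_{k=2}^{N}(D_k-D_{k+1})\,P_k^2 ,
\]
where $3\lambda_1-\lambda_2=(2N-3)/(2N(2N-1))>0$ for $N\ge2$ (and $N=1$ is trivial), and the only negative coefficients are the $D_k-D_{k+1}$ with $2\le k\le N-1$; one then dominates those few terms by the strictly positive diagonal ones via $P_k^2\le k\sum_{i\le k}v_i^2$ and the explicit size of $D_{k+1}-D_k$.

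\emph{Main obstacle.} In either route the hard part is the same — a careful induction/estimation with the rational expressions $i/(2N+1-i)$, made awkward by the irregular last index $n=N+1$, where $(S_1)_{N+1,N+1}=1$ rather than $2\lambda_{N+1}$ and where $D$ stops being increasing; the boundary of the induction must be handled separately. Once $S_0\succ0$ and $S_1\succ0$ are in hand, everything downstream is immediate: $\lambda\in\Lambda$ is a direct check, and for $0<h\le1$ the matrix $(1-h)S_0+hS_1$ is a nonnegative combination of two positive definite matrices carrying positive weight on $S_1$, hence positive definite, so $S(\lambda,t)\succeq0$ for all sufficiently large $t$ by the Schur complement.
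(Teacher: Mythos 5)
Your treatment of $S_0$ is complete and is exactly the paper's argument: the same sum-of-squares identity with coefficients $\lambda_k$, $\lambda_{k}-\lambda_{k-1}$, $\lambda_1$ and $1-\lambda_N$, all nonnegative with $\lambda_1>0$. The problem is $S_1$, which is where essentially all of the work in the paper's proof lives, and there your proposal has a genuine gap. Your first route (Sylvester's criterion organized through the bordered-inverse quantities $c_n=\mathbf{1}^\top(S_1^{(n)})^{-1}\mathbf{1}$ and pivots $p_n=(S_1)_{nn}-D_n^2c_{n-1}$) is a legitimate reformulation of what the paper does with leading principal minors, but you stop precisely where the difficulty begins: you never produce or estimate the $c_n$, and the decisive inequality $p_n>0$ is left as ``feed in the closed forms and check.'' That check \emph{is} the lemma. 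The paper discharges it by exhibiting explicit closed-form expressions \eqref{E:mkdet}--\eqref{E:mndet} for the minors $\det M_k$, verifying by induction that they satisfy the second-order recursion \eqref{E:mrecursion}, and reading off positivity because each closed form is a product of manifestly positive factors (note $2N-2k-1\ge 1$ for $k\le N-1$). Nothing in your write-up substitutes for that computation.

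Your second, ``fully explicit'' route is not merely incomplete but fails as stated. The identity
\[
 v^\top S_1 v=(3\lambda_1-\lambda_2)v_1^2+\sum_{k=2}^{N}(\lambda_k+\lambda_{k-1})v_k^2+\lambda_N v_{N+1}^2+(1-\lambda_N)P_{N+1}^2+\sum_{k=2}^{N}(D_k-D_{k+1})P_k^2
\]
is correct, but the negative terms cannot be absorbed by the diagonal ones in the way you indicate. Take $N=3$: then $\lambda=(\tfrac16,\tfrac25,\tfrac34)$, the only negative term is $-(D_3-D_2)P_2^2=-\tfrac{7}{60}(v_1+v_2)^2$, and the available coefficient of $v_1^2$ is $3\lambda_1-\lambda_2=\tfrac1{10}=\tfrac{6}{60}$. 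Any splitting $(v_1+v_2)^2\le(1+1/t)v_1^2+(1+t)v_2^2$ charges at least $\tfrac{7}{60}>\tfrac{6}{60}$ to $v_1^2$ (your cruder bound $P_2^2\le 2(v_1^2+v_2^2)$ charges $\tfrac{14}{60}$), so the diagonal budget at index $1$ is already exhausted. Positivity of $S_1$ genuinely requires playing the negative $P_k^2$ terms against the positive off-diagonal terms $(1-\lambda_N)P_{N+1}^2$ and $(D_N-D_{N+1})P_N^2$ as well, which your scheme never invokes; for large $N$ the negative part at, e.g., $v=\mathbf{1}$ is of the same order as the entire diagonal contribution, so no purely local domination can work. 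In short: the $S_0$ half coincides with the paper; the $S_1$ half is a plan whose decisive step is missing in your first route and incorrect in your second.
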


We are now ready to establish a new upper bound on the complexity of the gradient method for
values of $h$ between 0 and 1. To the best of our knowledge, the tightest bound thus far is
given by \eqref{E:oldgradientbound}.
\begin{theorem}\label{T:gradUpperBound}
Let $f\in C^{1,1}_L(\mathbb{R}^d)$ and let $x_0,\dots,x_N\in \mathbb{R}^d$ be generated by
Algorithm~\gradalg with $0< h \leq 1$. Then
\begin{equation}\label{E:gmbound}
    f(x_N)-f(x_\ast) \leq \frac{LR^2}{4 N h+2}.
\end{equation}
\end{theorem}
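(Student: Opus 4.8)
The plan is to prove the bound \eqref{E:gmbound} by exhibiting a feasible point for the dual problem \eqref{P:dualgpr} whose objective value equals $\frac{LR^2}{4Nh+2}$, and then invoking weak duality together with the chain of relaxations \eqref{P:baseproblem} $\to$ \eqref{P:gbound} $\to$ \eqref{P:gpr} $\to$ \eqref{P:dualgpr}. Concretely, I would set $t = \frac{1}{2Nh+1}$ and choose the multipliers $\lambda_i = \frac{i}{2N+1-i}$ for $i=1,\dots,N$ (the choice flagged in Lemma~\ref{L:positivedefinitea0a1}). The first task is to check $\lambda \in \Lambda$: nonnegativity is clear since $i \le N < 2N+1-i$; monotonicity $\lambda_{i+1} \ge \lambda_i$ follows because $i \mapsto \frac{i}{2N+1-i}$ is increasing on the relevant range; and $1-\lambda_N \ge 0$ reduces to $N \le N+1$, which holds (in fact $\lambda_N = \frac{N}{N+1}$). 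So $\lambda$ is dual-feasible for the $\Lambda$-constraints.

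The heart of the argument is to verify $S(\lambda,t) \succeq 0$ for this $(\lambda,t)$. Here I would use the Schur complement: since Lemma~\ref{L:positivedefinitea0a1} guarantees $S_0, S_1 \succ 0$, and for $0 < h \le 1$ the matrix $(1-h)S_0 + hS_1$ is a convex combination (well, conic combination with coefficients summing to one) of positive definite matrices, hence positive definite, $S(\lambda,t) \succeq 0$ is equivalent to the scalar inequality
\[
    t \ge q^T\big((1-h)S_0 + hS_1\big)^{-1} q,
\]
with $q = (\lambda_1, \lambda_2-\lambda_1, \dots, \lambda_N-\lambda_{N-1}, 1-\lambda_N)^T$. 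So I need to show $q^T M^{-1} q = \frac{1}{2Nh+1}$ where $M = (1-h)S_0 + hS_1$, or equivalently produce a vector $w$ with $Mw = q$ and $q^Tw = \frac{1}{2Nh+1}$. I would guess that $w$ has a simple closed form — most likely $w$ proportional to the all-ones vector or to a linear ramp $w_k = c\cdot k$ — and verify $Mw = q$ directly by examining the tridiagonal-plus-structured form of $S_0$ and the "staircase" structure of $S_1$ displayed in \eqref{b0matrix}--\eqref{b1matrix}. Given the telescoping nature of $q$'s entries (they are the successive differences of the $\lambda$'s, padded to sum to $1$), the product $S_1 q$ should collapse nicely, and $S_0 w$ with $w$ affine should also telescope. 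This explicit linear-algebra verification — finding the right $w$ and confirming both $Mw=q$ and the value of $q^Tw$ — is the step I expect to be the main obstacle, since it requires carefully unwinding the block structure and doing the bookkeeping with the specific $\lambda_i$.

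Once $S(\lambda,t) \succeq 0$ is established with $t = \frac{1}{2Nh+1}$, the dual objective value is $\frac12 LR^2 t = \frac{LR^2}{2(2Nh+1)} = \frac{LR^2}{4Nh+2}$. By weak duality applied to \eqref{P:gpr} (whose dual is \eqref{P:dualgpr} per Lemma~\ref{lem:dual}), $\val\eqref{P:gpr} \le \frac{LR^2}{4Nh+2}$. Since \eqref{P:gpr} was obtained from \eqref{P:gbound} by discarding constraints, $\val\eqref{P:gbound} \le \val\eqref{P:gpr}$; and since \eqref{P:gbound} is itself a relaxation of the original performance estimation problem \eqref{P:baseproblem} (the functional constraint $f \in C^{1,1}_L$ convex was replaced by the valid inequalities \eqref{carf} coming from Proposition~\ref{prop:lip}), we conclude
\[
    f(x_N) - f(x_\ast) = LR^2\delta_N \le \val\eqref{P:baseproblem} \le \frac{LR^2}{4Nh+2}.
\]
A minor point to handle carefully is the normalization: the substitution $\delta_N = \frac{1}{LR^2}(f(x_N)-f(x_\ast))$ was made assuming $\|x_\ast - x_0\| = R$, whereas in general only $\|x_\ast - x_0\| \le R$; but since $f(x_N) - f(x_\ast) = L\|x_\ast-x_0\|^2 \delta_N$ and the bound scales with $\|x_\ast-x_0\|^2 \le R^2$, replacing $R$ by the true distance only strengthens the inequality, so the stated bound holds for all $x_\ast \in X_\ast(f)$.
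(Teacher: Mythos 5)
Your proposal is correct and follows essentially the same route as the paper: the same dual point $\lambda_i=\tfrac{i}{2N+1-i}$, $t=\tfrac{1}{2Nh+1}$, the same appeal to Lemma~\ref{L:positivedefinitea0a1} for $(1-h)S_0+hS_1\succ 0$, and the same Schur-complement reduction followed by weak duality. The one step you leave open resolves exactly as you guess: the witness is $w=\tfrac{1}{2Nh+1}(1,\dots,1)^T$, i.e.\ $\bigl((1-h)S_0+hS_1\bigr)\mathbf{1}=(2Nh+1)q$ and $q^T\mathbf{1}=1$ by telescoping, which the paper packages as the single observation that $S(\lambda,t)u=0$ for $u=(1,\dots,1,-(2Nh+1))^T$, forcing $\det S=0$ and hence a zero Schur complement.
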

\begin{proof} First note that both (G) and \eqref{P:gpr} are clearly feasible 
and $\val (G) \leq\val\eqref{P:gpr}$.
Invoking Lemma \ref{lem:dual}, by weak duality for the pair of primal-dual problems
\eqref{P:gpr} and \eqref{P:dualgpr}, we thus obtain that $\val\eqref{P:gpr}\leq \val\eqref{P:dualgpr}$
and hence:
\begin{equation}\label{weak}
    f(x_N)-f^* \leq \val (G) \leq \val \eqref{P:gpr} \leq \val \eqref{P:dualgpr}.
\end{equation}

Now consider the following point $(\lambda,t)$ for the dual problem \eqref{P:dualgpr}:
\begin{align*}
    \lambda_i &= \frac{i}{2 N+1-i},\quad i=1,\dots,N,  \\
    t   &= \frac{1}{2Nh+1}.
\end{align*}
Assuming that this point is \eqref{P:dualgpr}-feasible, it follows from (\ref{weak}) that
$$
f(x_N)-f^* \leq \val\eqref{P:dualgpr} \leq \frac{L R^2}{4 N h+2},
$$
which proves the desired result. Thus, it remains to show that the above given choice
$(\lambda, t)$ is feasible for \eqref{P:dualgpr}. First, it is easy to see that all the linear constraints
of \eqref{P:dualgpr} on the variables $\lambda_i, i=1,\ldots ,N$ described through the set $\Lambda$
hold true. Now we prove that the matrix $S\equiv S(\lambda,t)$ is positive semidefinite. From
Lemma~\ref{L:positivedefinitea0a1}, with $h \in [0,1]$, we get that $(1-h)S_0 + h S_1$ is
positive definite, as a convex combination of positive definite matrices. Next, we argue that
the determinant of $S$ is zero. Indeed, take $u:=(1,\dots,1,-(2Nh+1))^T$, then from the
definition of $S$ and the choice of $\lambda_i$ and $t$ it follows by elementary algebra that
$Su=0$. To complete the argument, we note that the determinant of $S$ can also be found via
the identity (see, e.g., \cite[Section~A.5.5]{boyd2004}):
\[
    \det(S) = (t-q^T((1-h)S_0 + h S_1)^{-1}q) \det((1-h)S_0 + h S_1).
\]
Since we have just shown that  $(1-h)S_0 + h S_1 \succ 0$, then $\det((1-h)S_0 + h S_1)>0$ and
we get from the above identity that the value of $t-q^T((1-h)S_0 + h S_1)^{-1}q$, which is the
Schur complement of the matrix $S$, is equal to 0. By a well known lemma on the Schur
complement \cite[Lemma~4.2.1]{ben2001lectures}, we conclude that $S$ is positive semidefinite.
\end{proof}

The next theorem gives a lower bound on the complexity of Algorithm~\gradalg for all values of
$h$. In particular, it shows that the bound~\eqref{E:gmbound} is tight and that it can be
attained by a specific convex function in $C^{1,1}_L$.
\begin{theorem}\label{T:gradLowBound}
Let $L>0$, $N\in \mathbb{N}$ and $d\in \mathbb{N}$. Then for every $h>0$ there exists a convex
function $\varphi \in C^{1,1}_L(\mathbb{R}^d)$ and a point $x_0 \in \mathbb{R}^d$ such that
after $N$ iterations, Algorithm~\gradalg
reaches an approximate solution $x_N$ with following absolute inaccuracy
\[
    \varphi(x_N)-\varphi^\ast = \frac{L R^2}{2}
    \max \left( \frac{1}{2 N h+1},(1-h)^{2N}\right).
\]
\end{theorem}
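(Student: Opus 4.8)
The plan is to prove this lower bound constructively: for each of the two expressions inside the $\max$ I would exhibit an explicit worst‑case instance on which Algorithm~\gradalg attains exactly that value, and then take $\varphi$ to be whichever of the two functions produces the larger inaccuracy. Since each instance attains its value with equality, this choice yields $\varphi(x_N)-\varphi^\ast=\frac{LR^2}{2}\max\left(\frac{1}{2Nh+1},(1-h)^{2N}\right)$ exactly. Both instances will be built from a one‑dimensional profile: fixing a unit vector $\nu\in\real^d$, I take $\varphi(x)=\phi(\langle x,\nu\rangle)$ for a suitable convex scalar function $\phi$, and start at $x_0=R\nu$ with $x_\ast=0$, so that $\|x_0-x_\ast\|=R$. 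Since $\varphi'(x)=\phi'(\langle x,\nu\rangle)\,\nu$, the iterates of \gradalg stay on the ray $\real\nu$ and the whole analysis reduces to the scalar recursion $s_{i+1}=s_i-\frac{h}{L}\phi'(s_i)$ with $s_0=R$; membership of $\varphi$ in $C^{1,1}_L(\real^d)$ is inherited from the corresponding property of $\phi$.

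For the term $(1-h)^{2N}$, I would simply take $\phi(s)=\frac{L}{2}s^2$ (equivalently $\varphi(x)=\frac{L}{2}\|x\|^2$), which is convex and $C^{1,1}_L$ with unique minimizer $0$ and $\varphi^\ast=0$. Then \gradalg reads $s_{i+1}=(1-h)s_i$, so $s_N=(1-h)^N R$ and $\varphi(x_N)-\varphi^\ast=\frac{L}{2}(1-h)^{2N}R^2$, as required. For the term $\frac{1}{2Nh+1}$, I would use a Huber‑type profile: set $a:=R/(2Nh+1)$ and let $\phi(s)=\frac{L}{2}s^2$ for $|s|\le a$ and $\phi(s)=La|s|-\frac{L}{2}a^2$ for $|s|>a$. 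One checks directly that $\phi$ is convex, that it is $C^1$ (values and one‑sided derivatives agree at $s=\pm a$), and that $\phi'$ is $L$‑Lipschitz (slope $L$ on $(-a,a)$, constant outside), so $\varphi\in C^{1,1}_L(\real^d)$ with $\varphi^\ast=0$. Starting from $s_0=R>a$: as long as the current iterate satisfies $s_i\ge a$ one has $\phi'(s_i)=La$ and hence $s_{i+1}=s_i-ha$; by induction $s_i=R-iha=a(2Nh+1-ih)$, which is $\ge a$ exactly when $i\le 2N$. This holds for all $i=0,\dots,N$, so $s_N=a(Nh+1)$ lies in the linear region and $\varphi(x_N)-\varphi^\ast=La\cdot a(Nh+1)-\frac{L}{2}a^2=\frac{L}{2}a^2(2Nh+1)=\frac{LR^2}{2(2Nh+1)}$.

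The computations above are elementary; the one point requiring care — and the source of both the exponent $2N$ and the denominator $2Nh+1$ — is the inductive verification that in the Huber construction the iterates never leave the linear region during the first $N$ steps, and that this stays valid for \emph{every} $h>0$ rather than only for $h\le 1$. The condition $R-iha\ge a$ is equivalent to $i\le 2N$, so regardless of the magnitude of $h$ all of $s_0,\dots,s_N$ remain $\ge a$ and each step moves the iterate by exactly $-ha$; in particular no overshoot past $0$ or past $a$ can occur for $i\le N$. Combining the two cases by selecting the function achieving the larger value completes the proof; note in passing that for $0<h\le 1$ the first term dominates, so this matches exactly the upper bound of Theorem~\ref{T:gradUpperBound}, establishing its tightness.
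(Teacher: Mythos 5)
Your proposal is correct and follows essentially the same route as the paper: the quadratic $\frac{L}{2}\|x\|^2$ for the $(1-h)^{2N}$ term, and for the $\frac{1}{2Nh+1}$ term your Huber-type profile is exactly the Moreau proximal envelope of $\frac{1}{2Nh+1}\|x\|$ that the paper uses, with the same iterate computation. Your explicit inductive check that the iterates remain in the linear region for all $i\le 2N$ (hence for $i\le N$, for every $h>0$) is a detail the paper leaves implicit, but otherwise the two arguments coincide.
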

\begin{proof}
We will describe two functions that attain the two parts of the $\max$ expression in the above
claimed statement. For the sake of simplicity we will
assume that 
$L=1$ and $R=\|x_\ast-x_0\|=1$.
Generalizing this proof to general values of $L$ and $R$
can be done by an appropriate scaling.

To show the first part of the $\max$ expression, consider the function
\[
    \varphi_1(x) =
    \begin{cases}
        \frac{1}{2Nh+1} \|x\| -\frac{1}{2(2Nh+1)^2}       &   \| x \| \geq \frac{1}{2Nh+1} \\
        \frac{1}{2} \|x\|^2         &   \| x\| < \frac{1}{2Nh+1}
    \end{cases}.
\]
Note that this function is nothing else but the Moreau proximal envelope of the function
$\frac{1}{2Nh+1}\|x\|$, \cite{M65}. It is well known that this function is convex, continuously
differentiable with Lipschitz constant $L=1$, and that its minimal value $\varphi(x_*)=0$, see
e.g., \cite{M65, rock-wets-book98}. Applying the gradient method on $\varphi_1(x)$ with
$x_0=\nu$ where, as before, $\nu$ is a unit vector in $\mathbb{R}^d$, we obtain that for $i=0,\dots,N$:
\begin{eqnarray*}
  \varphi_1^\prime(x_i) &=& \frac{1}{2N h+1}\nu;\;  x_i = \left(1-\frac{i h}{2N h+1}\right)\nu,\\
 \mbox{and}\; \varphi_1(x_i) &=&  \frac{1}{2N h+1}\left(1-\frac{h i}{2N h+1} \right)-\frac{1}{2(2N h+1)^2}\\
 &=& \frac{1}{4Nh+2} \left(\frac{4Nh+1-2hi}{2Nh+1}\right).
 \end{eqnarray*}
Therefore,
\[
    \varphi_1(x_N)-\varphi_1(x_\ast)  = \varphi_1(x_N) = \frac{1}{4 N h+2}.
\]
To show the second part of the $\max$ expression, we apply the gradient method on
\[
    \varphi_2(x) = \frac{1}{2} \|x\|^2
\]
with $x_0=\nu$. We then get that for $i=0,\dots,N$:
$$
    \varphi_2^\prime(x_i)= x_i;\;  x_i =(1-h)^i \nu;\; \varphi_2(x_i) = \frac{1}{2}(1-h)^{2i},
$$
and hence
\[
    \varphi_2(x_N)-\varphi_2(x_\ast) = \varphi_2(x_N) = \frac{1}{2} (1-h)^{2N}
\]
and the desired claim is proven.
\end{proof}

Numerical experiments we have performed on problem \eqref{P:gbound} suggest that the
lower complexity bound given by Theorem~\ref{T:gradLowBound} is in fact the exact complexity bound on the gradient method
with $0<h<2$.
\begin{conjecture}\label{C:gradtightbound}
Suppose the sequence $x_0,\dots,x_N$ is generated by the gradient method \gradalg with $0 <h < 2$,
then
\[
    f(x_N)-f(x_\ast)\leq \frac{L R^2}{2}
    \max \left( \frac{1}{2 N h+1},(1-h)^{2N}\right).
\]
\end{conjecture}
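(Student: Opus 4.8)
The plan is to prove the conjecture by the same route used for Theorem~\ref{T:gradUpperBound}: bound $\val\eqref{P:gbound}$ from above by the value of a suitable Lagrangian dual and exhibit an explicit dual‑feasible certificate of value $\tfrac12 LR^2\max\!\big(\tfrac{1}{2Nh+1},(1-h)^{2N}\big)$; the matching lower bound is already supplied by Theorem~\ref{T:gradLowBound}. Normalize $L=R=1$ and note at the outset that for $0<h\le 1$ there is nothing new to do: since $1-h\le e^{-h}$ and $e^{x}\ge 1+x$ we have $(1-h)^{2N}\le e^{-2Nh}\le(2Nh+1)^{-1}$, so the maximum equals $(2Nh+1)^{-1}$ and the claimed bound is exactly \eqref{E:gmbound}. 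Thus the whole content of the conjecture lies in the range $1<h<2$. On that interval $a(h):=(2Nh+1)^{-1}$ is strictly decreasing while $b(h):=(1-h)^{2N}$ is strictly increasing, with $a(1^{+})>b(1^{+})$ and $b(2^{-})>a(2^{-})$, so there is a unique crossover $h^{\ast}=h^{\ast}(N)\in(1,2)$ with $a(h^{\ast})=b(h^{\ast})$; I would treat the sub‑intervals $(1,h^{\ast}]$ and $[h^{\ast},2)$ separately, and check at the end that the two certificates agree at $h^{\ast}$ so the piecewise bound is continuous.

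On $(1,h^{\ast}]$ I would try to reuse the certificate of Theorem~\ref{T:gradUpperBound}, namely $\lambda_i=i/(2N+1-i)$ and $t=1/(2Nh+1)$ for the dual \eqref{P:dualgpr}. The only place the proof of Theorem~\ref{T:gradUpperBound} uses $h\le 1$ is the assertion that $(1-h)S_0+hS_1\succ0$, obtained there as a convex combination of positive definite matrices; the algebraic facts that $S(\lambda,t)u=0$ for $u=(1,\dots,1,-(2Nh+1))^{T}$ and that the Schur complement of $t$ then equals $(2Nh+1)^{-1}$ hold for every $h$. Hence it suffices to show that $(1-h)S_0+hS_1$ stays positive definite for $h$ up to $h^{\ast}$. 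This is a strengthening of Lemma~\ref{L:positivedefinitea0a1}: I would track the sign of the leading principal minors as a function of $h$, exploiting the tridiagonal‑plus‑rank‑one structure of $S_0$ and $S_1$ and the continued‑fraction recursion already used in the appendix, and identify the value of $h$ at which positive definiteness is lost. Consistency with the lower bound (the quadratic instance $g_i=(1-h)^i\nu,\ \delta_i=\tfrac12(1-h)^{2i}$ is feasible for \eqref{P:gbound} and forces $\val\eqref{P:gbound}\ge\tfrac12(1-h)^{2N}$) shows the breakdown cannot occur before $h^{\ast}$; proving that it occurs exactly at $h^{\ast}$ — or, failing that, producing a separate $a(h)$‑certificate on the residual gap — is the first technical hurdle.

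On $[h^{\ast},2)$ the relaxation \eqref{P:gpr} is genuinely too weak (its value is at least $b(h)$, which exceeds the value $a(h)$ the old certificate would give), so one must go back to the full Quadratic Matrix Program \eqref{P:gbound}, form its Lagrangian dual using all four families $A_{i,j},B_{i,j},C_i,D_i$ together with the dimension‑reduction step of Lemma~\ref{lem:dual}, and produce a new certificate of value $\tfrac12(1-h)^{2N}$. Here I would be guided by complementary slackness at the conjectured primal maximizer, which Theorem~\ref{T:gradLowBound} identifies as $\varphi_2=\tfrac12\|x\|^{2}$: at the corresponding primal point $g_i=(1-h)^i\nu$, $\delta_i=\tfrac12(1-h)^{2i}$, $g_{\ast}=\delta_{\ast}=0$, one checks by direct substitution that the $C_i$ inequalities, the nearest‑neighbour $A_{i-1,i}$ inequalities and the $D_i$ inequalities are all tight, which pins down the support of the optimal multipliers; their values are then forced by the stationarity equations. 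The remaining, and I expect the main, obstacle is to verify that the resulting $(N+2)\times(N+2)$ dual slack matrix is positive semidefinite for all $N$ and all $h\in[h^{\ast},2)$: unlike $S_0,S_1$ it will not be a structured convex combination, so an explicit Cholesky‑type factorization — most plausibly after the rescaling $w_i\mapsto(1-h)^{-i}w_i$ that trivializes the geometric decay — or an eigenvalue‑interlacing argument seems necessary.

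A cleaner but possibly harder alternative, suggested by the reported numerics, would be to prove directly that the Lagrangian (Shor) SDP relaxation of the Quadratic Matrix Program \eqref{P:gbound} is exact for $0<h<2$: exhibit an optimal dual slack whose kernel is one‑dimensional and spanned by the vectorization of the worst‑case gradient sequence (the Huber sequence $g_i=(2Nh+1)^{-1}\nu$ for $h\le h^{\ast}$, the geometric sequence $g_i=(1-h)^i\nu$ for $h\ge h^{\ast}$), which yields a rank‑one primal optimum and hence equality with the lower bound of Theorem~\ref{T:gradLowBound} in one stroke. Either way, one should finally confirm that letting $h\to2^{-}$ recovers the expected degradation of the bound towards $\tfrac12 LR^{2}$.
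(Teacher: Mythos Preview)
The paper does not prove this statement: Conjecture~\ref{C:gradtightbound} is stated explicitly as a conjecture, supported only by numerical experiments on \eqref{P:gbound}, and is left open. There is therefore no proof in the paper to compare your proposal against.

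On its own merits, your proposal is a coherent research outline rather than a proof, and it contains one concrete logical slip. The consistency argument you invoke on $(1,h^{\ast}]$ points the wrong way. The quadratic instance gives $\val\eqref{P:gpr}\ge\tfrac12 LR^{2}(1-h)^{2N}$, while feasibility of the certificate $(\lambda,t)$ with $t=(2Nh+1)^{-1}$ would give, by weak duality, $\val\eqref{P:gpr}\le\tfrac12 LR^{2}(2Nh+1)^{-1}$. These two inequalities are compatible only when $(1-h)^{2N}\le(2Nh+1)^{-1}$, i.e.\ for $h\le h^{\ast}$; hence the lower bound forces the certificate to \emph{fail} for every $h>h^{\ast}$, but says nothing whatsoever about its survival on $(1,h^{\ast}]$. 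You therefore have no argument, even heuristic, that $(1-h)S_0+hS_1\succ0$ persists beyond $h=1$; for $h>1$ this matrix is no longer a convex combination of positive definite matrices, and Lemma~\ref{L:positivedefinitea0a1} gives no direct leverage.

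For the regime $h\ge h^{\ast}$ your plan---dualize the full problem \eqref{P:gbound}, read off multipliers from complementary slackness at the quadratic worst case of Theorem~\ref{T:gradLowBound}, then verify positive semidefiniteness of the resulting dual slack---is natural but is essentially a restatement of the difficulty: the PSD verification for a two-parameter family of $(N{+}2)\times(N{+}2)$ matrices, uniformly in $N$ and $h$, is precisely the hard step, and nothing in the proposal indicates how it would be carried out. In short, the ingredients you list are the right ones, but none of them is actually closed here, and the one inference you draw from the lower bound is oriented in the wrong direction.
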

We now turn our attention to the problem of choosing the step size, $h$.
Assuming the above conjecture holds true,
the optimal step size (i.e., the step size that produces the best complexity bound)
for the gradient method with constant step size
is given by the unique positive solution to the equation $$\frac{1}{2 N
h+1}=(1-h)^{2N}.$$
The solution of this equation approaches $2$ as $N$ grows. Therefore, assuming the conjecture
holds true and $N$ is large enough, the complexity of the gradient method with the optimal
step size approaches~$\frac{LR^2}{8 N+2}$. This represents a significant improvement, by the
factor of 4, upon the best known bound on the gradient method \eqref{E:oldgradientbound}, and
also supports the observation that the gradient method often performs better in practice than
in theory.

\section{A Class of First-Order Methods: Numerical Bounds}\label{S:numerical_bounds}
The framework developed in the previous sections 
will now serve as a basis to extend the performance analysis for  a broader class of first-order methods
for minimizing a smooth convex function over $\mathbb{R}^d$. First, we define a general class
of first-order algorithms (FO) and we show that it encompasses some interesting first-order methods.
Then, following our approach, we define the corresponding PEP associated with the class of
algorithms (FO). Although for this more general case, an analytical solution is not available
for determining the bound, we establish that given a fixed number of steps $N$, a
{\em numerical bound} on the performance of (FO) can be efficiently computed. We then illustrate how this result can
be applied for deriving new complexity bounds on two first-order methods, which belong to the class (FO).

\subsection{A General First-Order Algorithm: Definition and Examples}

As before, our family $\mathcal{F}$ is the class of convex functions in $C^{1,1}_L(\real^d)$,
and $\{d,N,L, R\}$ are fixed. Consider the following class of first-order methods:
\medskip

\fbox{\parbox{14cm}{ {\bf Algorithm FO} \medskip
\begin{enumerate}
\setcounter{enumi}{-1}
\item Input: $f\in C^{1,1}_L(\mathbb{R}^d)$, $x_0\in \mathbb{R}^d.$
\item For $i=0,\dots,N-1$, compute $x_{i+1} = x_i - \frac{1}{L} \sum_{k=0}^{i} h^{(i+1)}_{k} f^\prime(x_k)$.
\end{enumerate}
}}
\medskip

Here, $h_k^{(i)} \in \real$ play the role of step-sizes, which we assume to be fixed and determined by
the algorithm.

The interest in the analysis of first-order algorithms of this type is motivated by the fact
that it covers some fundamental first-order schemes beyond the gradient method. In particular,
to motivate (FO), let us consider the following two algorithms which are of particular
interest, and as we shall see below can be seen as special cases of (FO).

We start with the so-called Heavy Ball Method (HBM). For earlier work on this method see
Polyak \cite{polyak1964}, and for some interesting modern developments and applications, we
refer the reader to Attouch et al. \cite{atto-goud-redo-97, atto-bolt-redo-02} and references
therein.

\begin{example}
\textbf{The Heavy Ball Method (HBM)}

\fbox{\parbox{14cm}{ {\bf Algorithm HBM} \medskip
\begin{enumerate}
\setcounter{enumi}{-1}
\item Input: $f\in C^{1,1}_L(\mathbb{R}^d)$, $x_0\in \mathbb{R}^d$,
\item $x_1 \leftarrow x_0 - \frac{\alpha}{L} f^\prime(x_0)$
\item For $i=1,\dots,N-1$ compute: $x_{i+1} = x_i - \frac{\alpha}{L} f^\prime(x_i) +\beta (x_i-x_{i-1})$
 \label{S:heavyballstep}
\end{enumerate}
}}
\medskip

Here the step sizes $\alpha$ and  $\beta$ are chosen such that $0 \leq \beta < 1$ and $0 < \alpha <
2(1+\beta)$, see \cite{polyak1964}.

By recursively eliminating the term $x_i-x_{i-1}$ in step 2 of HBM, we can rewrite this step
as
\[
    x_{i+1}= x_i - \frac{1}{L} \sum_{k=0}^{i} \alpha \beta^{i-k} f^\prime(x_k),\quad i=1,\dots,N-1.
\]
Therefore, the heavy ball method is a special case of Algorithm~\mainalg with the choice
$$h^{(i+1)}_{k}= \alpha \beta^{i-k}, \quad  k=0, \ldots, i, \; i=0, \ldots N-1 .$$
\end{example}

\medskip

The next algorithm  is Nesterov's celebrated \emph{Fast Gradient Method} \cite{nest83}.

\begin{example}\textbf{The fast gradient method (FGM)}

\fbox{\parbox{14cm}{ {\bf Algorithm FGM} \medskip
\begin{enumerate}
\setcounter{enumi}{-1}
\item Input: $f\in C^{1,1}_L(\mathbb{R}^d)$, $x_0\in \mathbb{R}^d$,
\item $y_1 \leftarrow x_0$, $t_1 \leftarrow 1$,
\item For $i=1,\dots,N$ compute:
\begin{enumerate}
    \item $x_i \leftarrow y_i -\frac{1}{L} f^\prime(y_i)$, \label{S:xdef}
    \item $t_{i+1} \leftarrow \frac{1+\sqrt{1+4 t_i^2}}{2}$,
    \item $y_{i+1} \leftarrow x_i + \frac{t_i-1}{t_{i+1}}(x_i-x_{i-1})$. \label{S:ydef}
\end{enumerate}
\end{enumerate}
}}
\end{example}

A major breakthrough was achieved by Nesterov in \cite{nest83}, where he proved that the FGM,
which requires almost no increase in computational effort when compared to the basic gradient
scheme, achieves the improved rate  of convergence $O(1/N^2)$ for function values. 
More precisely, one has\footnote{The bound given here, which improves the original bound derived in \cite{nest83}, was recently obtained in Beck-Teboulle \cite{beck-tebo-fista09}.}
\begin{equation}\label{E:nesterovBound}
    f(x_N)-f^\ast \leq \frac{2L\|x_0-x_\ast\|^2}{(N+1)^2}, \quad \forall x_\ast \in X_\ast(f).
\end{equation}
The order of complexity of Nesterov's algorithm is also {\em optimal}, as it is possible to
show that there exists a convex function $f\in C^{1,1}_L(\mathbb{R}^d)$ such that when $d\geq 2N+1$,
and under some other mild assumptions, \emph{any} first-order algorithm that generates a point
$x_N$ by performing $N$ calls to a first-order oracle of $f$ satisfies
\cite[Theorem~2.1.7]{nest-book-04}
\[
    f(x_N)-f^\ast \geq \frac{3L \|x_0-x_\ast\|^2}{32(N+1)^2}, \quad \forall x_\ast \in X_\ast(f).
\]
This fundamental algorithm discovered about 30 years ago by Nesterov \cite{nest83} has been
recently revived and is currently subject of intensive research activities. For some of its
extensions and many applications, see e.g., the recent survey paper Beck-Teboulle
\cite{beck-tebo-survey10} and references therein.
\medskip

At first glance, Algorithm~FGM seems different than the scheme \mainalg defined above.
Here two sequences are defined: the main sequence $x_0,\dots,x_N$ and an auxiliary sequence $y_1,\dots,y_N$.
Observing that the gradient of the function is only evaluated on the {\em auxiliary sequence}
of points $\{y_i\}$, we show in the next proposition that
FGM fits in the scheme \mainalg through the following algorithm:

\fbox{\parbox{14cm}{ {\bf Algorithm \fastgradalgb} \medskip
\begin{enumerate}
\setcounter{enumi}{-1}
\item Input: $f\in C^{1,1}_L(\mathbb{R}^d)$, $x_0\in \mathbb{R}^d$,
\item $y_1 \leftarrow x_0$,
\item For $i=1,\dots,N-1$ compute:
\begin{enumerate}
    \item$y_{i+1} \leftarrow y_i  -\frac{1}{L} \sum_{k=1}^{i} h^{(i+1)}_{k} f^\prime(y_k)$, \label{S:nesterovserialstep}
\end{enumerate}
\item $x_N \leftarrow y_N -\frac{1}{L} f^\prime(y_N)$,
\end{enumerate}
}}

with
\begin{align}
    h^{(i+1)}_k = \begin{cases}
        \frac{t_{i}-1}{t_{i+1}}h^{(i)}_k    & k+2\leq i, \\
        \frac{t_{i}-1}{t_{i+1}}(h^{(i)}_{i-1}-1)    & k = i-1, \\
        1+\frac{t_{i}-1}{t_{i+1}}   & k = i
    \end{cases} \qquad (i=1,\dots,N-1, \ k=1,\dots,i). \label{E:nesterovsteps}
\end{align}
and
\begin{align*}
    t_i =
        \begin{cases}
            1       & i=1 \\
            \frac{1+\sqrt{1+4 t_{i-1}^2}}{2} & i>1.
        \end{cases}
\end{align*}

\begin{proposition}
The points $y_1,\dots,y_N,x_N$ generated by Algorithm~\fastgradalgb are identical to the
respective points generated by Algorithm~\fastgradalg.
\end{proposition}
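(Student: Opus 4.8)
The plan is to prove the proposition by induction on $i$, establishing that the auxiliary iterates $y_i$ and the scalars $t_i$ produced by \fastgradalg and \fastgradalgb coincide, and, more precisely, that the consecutive differences admit the closed form
\[
    y_{i+1}-y_i=-\frac{1}{L}\sum_{k=1}^{i} h^{(i+1)}_k f'(y_k),\qquad i=1,\dots,N-1,
\]
with the coefficients $h^{(i+1)}_k$ given in \eqref{E:nesterovsteps}. Since the recursion $t_{i+1}=\tfrac{1+\sqrt{1+4t_i^2}}{2}$ with $t_1=1$ is literally identical in the two schemes, it is enough to track the $y_i$'s; once they are shown to agree, the closing line $x_N\leftarrow y_N-\tfrac1L f'(y_N)$ is the same in both algorithms, which finishes the argument.

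For the base case $i=1$ one checks directly that in \fastgradalg the update $x_1=y_1-\tfrac1L f'(y_1)$ followed by the momentum step $y_2=x_1+\tfrac{t_1-1}{t_2}(x_1-x_0)$ gives $y_2=x_1$ because $t_1=1$, hence $y_2=y_1-\tfrac1L f'(y_1)$; this matches \fastgradalgb since \eqref{E:nesterovsteps} yields $h^{(2)}_1=1+\tfrac{t_1-1}{t_2}=1$.

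For the inductive step I would assume that $y_1,\dots,y_i$ agree in both algorithms and that $y_i-y_{i-1}=-\tfrac1L\sum_{k=1}^{i-1}h^{(i)}_k f'(y_k)$. Writing the \fastgradalg update as $y_{i+1}=\big(1+\tfrac{t_i-1}{t_{i+1}}\big)x_i-\tfrac{t_i-1}{t_{i+1}}x_{i-1}$ and substituting $x_j=y_j-\tfrac1L f'(y_j)$ (with $x_0=y_1$ when $i=1$), a short computation gives
\[
    y_{i+1}-y_i=-\frac1L f'(y_i)+\frac{t_i-1}{t_{i+1}}\left[(y_i-y_{i-1})-\frac1L\big(f'(y_i)-f'(y_{i-1})\big)\right].
\]
Plugging in the induction hypothesis for $y_i-y_{i-1}$ and collecting the coefficient of each $f'(y_k)$ then produces: the coefficient $1+\tfrac{t_i-1}{t_{i+1}}$ for $k=i$; the coefficient $\tfrac{t_i-1}{t_{i+1}}\big(h^{(i)}_{i-1}-1\big)$ for $k=i-1$ (the $-\tfrac1L f'(y_{i-1})$ momentum term combining with the $h^{(i)}_{i-1}$ term coming from the expansion of $y_i-y_{i-1}$); and the coefficient $\tfrac{t_i-1}{t_{i+1}}h^{(i)}_k$ for $k\le i-2$ — exactly the three branches of \eqref{E:nesterovsteps}. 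This closes the induction.

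The computations are routine; the only error-prone part is the bookkeeping of the three ranges of $k$, and in particular the boundary case $i=2$, where $\sum_{k=1}^{i-1}$ has a single term and the ``$k\le i-2$'' branch is vacuous. I do not expect any conceptual obstacle. The one point worth highlighting is that the whole reduction hinges on the fact that in \fastgradalg the first-order oracle is queried only at the auxiliary points $y_i$, never at the $x_i$, which is precisely what allows the momentum term $x_i-x_{i-1}$ to be rewritten purely in terms of past gradients $f'(y_k)$ and hence to cast FGM as an instance of the scheme \fastgradalgb.
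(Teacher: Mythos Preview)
Your proposal is correct and follows essentially the same approach as the paper: an induction on $i$ showing that the $y_i$'s coincide, with the same base case (using $t_1=1$ to get $y_2=y_1-\tfrac1L f'(y_1)$) and the same inductive computation expanding $x_i-x_{i-1}=(y_i-y_{i-1})-\tfrac1L(f'(y_i)-f'(y_{i-1}))$ and matching the three branches of \eqref{E:nesterovsteps}. The only cosmetic difference is direction: the paper starts from the \fastgradalgb update, substitutes the $h^{(n+1)}_k$, and simplifies back to the \fastgradalg relation, whereas you start from \fastgradalg and read off the coefficients; the algebra is identical.
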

\begin{proof}
We will show by induction that the sequence $y_i$ generated by Algorithm~\fastgradalgb is
identical to the sequence $y_i$ generated by Algorithm~\fastgradalg, and that the value of
$x_N$ generated by Algorithm~\fastgradalgb is equal to the value of $x_N$ generated by
Algorithm~\fastgradalg.

First note that the sequence $t_i$ is defined by the two algorithms in the same way. Now let
$\{x_i$, $y_i\}$ be the sequences generated by \fastgradalg and denote by $\{y_i^\prime\}$,
$x_N^\prime$ the sequence generated by \fastgradalgb. Obviously, $y^\prime_1 = y_1$ and since
$t_1=1$ we get using the relations \ref{E:nesterovsteps}:
\begin{align*}
    y_2^\prime &= y_1^\prime - \frac{1}{L} h^{(2)}_1 f^\prime(y_1^\prime) = y_1 - \frac{1}{L} \left(1+\frac{t_{1}-1}{t_2} \right) f^\prime(y_1) = y_1 - \frac{1}{L} f^\prime(y_1)= x_1 = y_2.
\end{align*}
Assuming $y_i^\prime = y_i$ for $i = 1,\dots,n$, we then have
\begin{align*}
    y_{n+1}^\prime &= y_n^\prime - \frac{1}{L} h^{(n+1)}_n f^\prime(y_n^\prime) - \frac{1}{L} h^{(n+1)}_{n-1} f^\prime(y_{n-1}^\prime) - \frac{1}{L} \sum_{k=1}^{n-2} h^{(n+1)}_k f^\prime(y_k^\prime) \\
        &= y_n - \frac{1}{L} (1+\frac{t_n-1}{t_{n+1}}) f^\prime(y_n) - \frac{1}{L} \frac{t_n-1}{t_{n+1}}(h^{(n)}_{n-1}-1) f^\prime(y_{n-1}) - \frac{1}{L} \sum_{k=1}^{n-2} \frac{t_n-1}{t_{n+1}}h^{(n)}_k f^\prime(y_k^\prime) \\
        &= y_n - \frac{1}{L} f^\prime(y_n) +\frac{t_n-1}{t_{n+1}} \left( -\frac{1}{L}  f^\prime(y_n) + \frac{1}{L} f^\prime(y_{n-1}) - \frac{1}{L} \sum_{k=1}^{n-1} h^{(n)}_k f^\prime(y_k^\prime)\right) \\
        &= x_n +\frac{t_n-1}{t_{n+1}} \left( -\frac{1}{L}  f^\prime(y_n) + \frac{1}{L} f^\prime(y_{n-1}) +y_n^\prime-y_{n-1}^\prime\right) \\
        &= x_n +\frac{t_n-1}{t_{n+1}} (x_n-x_{n-1}) \\
        &= y_{n+1}.
\end{align*}
Finally,
\[
    x_N^\prime = y_N^\prime -\frac{1}{L} f^\prime(y_N^\prime) = y_N -\frac{1}{L} f^\prime(y_N) = x_N.
\]
\end{proof}

\subsection{A Numerical Bound for \mainalg}

To build the performance estimation problem for Algorithm~\mainalg, from which a complexity
bound can be derived, we follow the approach 
used to derive problem \eqref{P:gbound} for the gradient method. The only difference being
that here, of course, the relation between the variables $x_i$ is derived from the main
iteration of Algorithm~\mainalg. After some algebra, the resulting PEP for the class of
algorithms (FO) reads
\begin{align*}
    \begin{aligned}
    \max_{G\in \mathbb{R}^{(N+1)\times d},\delta_i \in\mathbb{R}}\ &  LR^2 \delta_N \\
    \text{s.t. }
                        & \trace (G^T \tilde{A}_{i,j} G) \leq \delta_i - \delta_j,\quad    i<j=0,\dots,N, \\
                        & \trace(G^T \tilde{B}_{i,j} G) \leq \delta_i - \delta_j,\quad      j<i=0,\dots,N, \\
                        & \trace(G^T \tilde{C}_i G) \leq \delta_i,\quad                             i=0,\dots,N, \\
                        & \trace(G^T \tilde{D}_i G+ \nu u_i^T G ) \leq - \delta_i,\quad     i=0,\dots,N,
    \end{aligned}
    \tag{Q}\label{P:fop}
\end{align*}
where  $\tilde{A}_{i,j}$, $\tilde{B}_{i,j}$, $\tilde{C}_i$ and $\tilde{D}_i$ are defined, similarly to \eqref{E:gradientmatrices}, by
\begin{equation}\label{E:defmatrices}
\begin{aligned}
    \tilde{A}_{i,j} &= \frac{1}{2} (u_i-u_j)(u_i-u_j)^T + \frac{1}{2}\sum_{t=i+1}^{j} \sum_{k=0}^{t-1} h_k^{(t)}(u_j u_k^T+u_k u_j^T),\\
    \tilde{B}_{i,j} &= \frac{1}{2} (u_i-u_j)(u_i-u_j)^T -  \frac{1}{2}\sum_{t=j+1}^{i} \sum_{k=0}^{t-1} h_k^{(t)}(u_j u_k^T+u_k u_j^T),\\
    \tilde{C}_i &= \frac{1}{2} u_i u_i^T,\\
    \tilde{D}_i &= \frac{1}{2} u_i u_i^T+  \frac{1}{2}\sum_{t=1}^{i} \sum_{k=0}^{t-1} h_k^{(t)} (u_i u_k^T + u_k u_i^T)
\end{aligned}
\end{equation}
and we recall that $\nu \in \mathbb{R}^d$ is a given unit vector, $u_i=e_{i+1}\in\mathbb{R}^{N+1}$ and
the notation $i <j=0,\dots,N$ is a shorthand notation for $i=0,\dots,N-1, j= i+1,\dots, N$.

In view of the difficulties in the analysis required to find the solution of \eqref{P:gbound},
an analytical solution to this more general case seems unlikely. However, as we now proceed to
show, we can derive a {\em numerical bound} on this problem that can be efficiently computed.

Following  the analysis of the gradient method, (cf.\eqref{P:gpr} in \S\ref{sub-tight}) we consider the
following simpler relaxed problem:
\begin{align*}
    \begin{aligned}
    \max_{G\in \mathbb{R}^{(N+1)\times d},\delta_i \in\mathbb{R}}\ &  LR^2 \delta_N \\
    \text{s.t. }
                        & \trace (G^T \tilde{A}_{i-1,i} G) \leq \delta_{i-1} - \delta_i,\quad    i=1,\dots,N, \\
                        & \trace(G^T \tilde{D}_i G+ \nu u_i^T G ) \leq - \delta_i,\quad     i=0,\dots,N.
    \end{aligned}
    \tag{Q$^\prime$}\label{P:fopr}
\end{align*}

With the same proof as given in Lemma~\ref{lem:dual}  for problem \eqref{P:fopr}, we obtain that a dual
problem for \eqref{P:fopr} is given by the following convex semidefinite optimization problem:

\begin{align*}
    \begin{aligned}
    \min_{\lambda, \tau,t}\ & \frac{1}{2} t \\
    \text{s.t. }
            & \begin{pmatrix}
                \sum_{i=1}^N \lambda_i  \tilde{A}_{i-1,i}  + \sum_{i=0}^N \tau_i  \tilde{D}_i  &\frac{1}{2} \tau \\
                \frac{1}{2} \tau^T      & \frac{1}{2} t
        \end{pmatrix} \succeq 0, \\
        & (\lambda, \tau) \in \tilde{\Lambda},
    \end{aligned}
    \tag{DQ$^\prime$}\label{P:foprd}
\end{align*}
where
\begin{equation}\label{E:duallinearconstraints}
    \tilde{\Lambda} = \{ (\lambda,\tau) \in \real^{N}_{+}\times \real^{N+1}_{+} :
         \tau_0 = \lambda_1, \
        \lambda_i-\lambda_{i+1}+\tau_i = 0, \  i=1,\dots,N-1,\
        \lambda_N+\tau_N=1
    \}.
\end{equation}

Note that the data matrices of both primal-dual problems \eqref{P:fopr}
and \eqref{P:foprd} depend on the step-sizes $h^{(i)}_k$.
To avoid a trivial bound for problem $\eqref{P:fopr}$, here we need the following assumption
on the dual problem \eqref{P:foprd}:

{\bf Assumption 1} Problem \eqref{P:foprd} is solvable, i.e., the minimum is finite and attained for the
given step-sizes $h^{(i)}_k$.

Actually, the attainment requirement can be avoided if we can exhibit a feasible point
$(\lambda, \tau, t)$ for the problem \eqref{P:foprd}. As noted earlier, given the difficulties already
encountered for the simpler gradient method, finding explicitly such a point for the general
class of algorithms (FO) is unlikely. However, the structure of problem  \eqref{P:foprd} will be very
helpful in the analysis of the next section which further addresses the role of the
step-sizes.

The promised numerical bound now easily follows showing that a complexity bound for (FO) is
determined by the optimal value of the dual problem \eqref{P:foprd} which can be computed
efficiently by any numerical algorithm for SDP (see e.g.,
\cite{ben2001lectures,Helmberg96aninterior-point,vandenberghe1996semidefinite}).

\begin{proposition}\label{numb} Fix any $N , d \in \mathbb{N}$. Let $f \in C^{1,1}_L(\real^d)$
be convex and suppose that $x_0, \ldots ,x_N \in \real^d$ are generated by Algorithm \mainalg,
and  that assumption 1 holds.
Then,
$$
f(x_N) - f^* \leq LR^2 \val\eqref{P:foprd}.
$$
\end{proposition}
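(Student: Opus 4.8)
The plan is to follow exactly the chain of inequalities already established for the gradient method in Theorem~\ref{T:gradUpperBound}, but now starting from problem \eqref{P:fop} instead of \eqref{P:gbound}. First I would observe that since $f\in C^{1,1}_L(\real^d)$ is convex and $x_0,\dots,x_N$ are generated by Algorithm~\mainalg, the points $\{x_0,\dots,x_N,x_\ast\}$ together with the scaled gradients $g_i$ and scaled function-value gaps $\delta_i$ (defined precisely as in the gradient-method analysis) satisfy all the inequalities of \eqref{P:fop}: these come from applying Proposition~\ref{prop:lip} to every pair of the selected points and substituting the \mainalg recurrence to eliminate the $x_i$. Hence $(G,\delta)$ built from the actual run of the algorithm is feasible for \eqref{P:fop}, and therefore $f(x_N)-f^\ast = LR^2\delta_N \le \val\eqref{P:fop}$.

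Next I would pass to the relaxation \eqref{P:fopr}, which is obtained from \eqref{P:fop} simply by discarding all constraints except the $N$ descent-type inequalities $\trace(G^T\tilde A_{i-1,i}G)\le\delta_{i-1}-\delta_i$ and the $N+1$ nonhomogeneous inequalities $\trace(G^T\tilde D_iG+\nu u_i^TG)\le-\delta_i$. Since we are maximizing and we only removed constraints, $\val\eqref{P:fop}\le\val\eqref{P:fopr}$. Then, invoking the fact (stated in the text, ``with the same proof as given in Lemma~\ref{lem:dual}'') that \eqref{P:foprd} is a Lagrangian dual of \eqref{P:fopr}, weak duality gives $\val\eqref{P:fopr}\le\val\eqref{P:foprd}$. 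Chaining these together yields
\[
f(x_N)-f^\ast \;\le\; \val\eqref{P:fop} \;\le\; \val\eqref{P:fopr} \;\le\; \val\eqref{P:foprd} \;=\; LR^2\val\eqref{P:foprd},
\]
where the last equality just records that the objective of \eqref{P:foprd} is $\tfrac12 t$ while in the primal it was $LR^2\delta_N$, so after undoing the normalization the dual optimal value carries the factor $LR^2$ (exactly as in the transition from $\val\eqref{P:agpr}$ to the final bound in Theorem~\ref{T:gradUpperBound}).

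The role of Assumption~1 is to make the final link meaningful: without solvability of \eqref{P:foprd} the dual value could be $+\infty$ (a vacuous bound) or the ``dual'' could fail to be attained; assuming it is solvable guarantees $\val\eqref{P:foprd}$ is a genuine finite number computable by an SDP solver, and weak duality still only requires feasibility of the dual, which is implied. The only point requiring a little care — and the one I would flag as the main (minor) obstacle — is to be sure that the generalization of Lemma~\ref{lem:dual} really does go through verbatim for the matrices $\tilde A_{i-1,i},\tilde D_i$ in place of $A_{i-1,i},D_i$: the derivation there used the separability of the Lagrangian in $(\delta,G)$, the linear elimination of the $\tau_i$ via the relations coming from the $\delta$-coefficients, and Lemma~\ref{L:quadlemma} to reduce $\min_G L_2(G\nu^T,\cdot)$ to a scalar-quadratic-plus-Schur-complement argument; none of these steps used the specific numerical entries of the gradient-method matrices, only the fact that $\tilde D_i$ has the form $\tfrac12 u_iu_i^T+\tfrac12\sum h^{(t)}_k(u_iu_k^T+u_ku_i^T)$ and that the nonhomogeneous term is $\nu u_i^TG$, so the argument transfers. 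Once that is noted, the proof is just the display above.
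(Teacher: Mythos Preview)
Your approach is correct and coincides with the paper's own argument, which is the one-line ``Follows from weak duality for the pair of primal-dual problems \eqref{P:fopr}--\eqref{P:foprd}.'' Your expanded explanation (feasibility of the actual run in \eqref{P:fop}, then the two relaxations, then weak duality, with the observation that the dualization of Lemma~\ref{lem:dual} uses nothing specific about the gradient-method matrices) is exactly the reasoning the paper leaves implicit.

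One small correction: the displayed chain as written,
\[
f(x_N)-f^\ast \le \val\eqref{P:fop} \le \val\eqref{P:fopr} \le \val\eqref{P:foprd} = LR^2\val\eqref{P:foprd},
\]
is garbled. Since \eqref{P:fopr} carries the objective $LR^2\delta_N$ while \eqref{P:foprd} has objective $\tfrac12 t$ (without the $LR^2$ factor, unlike \eqref{P:dualgpr}), weak duality gives $\val\eqref{P:fopr}\le LR^2\val\eqref{P:foprd}$ directly; there is no separate equality step. Your prose explanation after the display shows you understand this, so just fix the display to read $\val\eqref{P:fopr}\le LR^2\val\eqref{P:foprd}$.
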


\begin{proof} Follows from weak duality for the pair of primal-dual problems \eqref{P:fopr}-\eqref{P:foprd}
\end{proof}




\subsection{Numerical Illustrations}\label{s:numeril}
We apply Proposition \ref{numb} to find bounds on the complexity of the heavy ball method
(HBM) with\footnote{According to our simulations, this choice for the values of $\alpha, \beta$ produce
results that are typical of the behavior of the algorithm.}
$\alpha=1$ and $\beta=\frac{1}{2}$  and on the fast
gradient method (FGM) with $h_k^{(i)}$ as given in (\ref{E:nesterovsteps}), which as shown
earlier, can both be viewed as particular realizations of (FO).

The resulting SDP programs were solved for different values of $N$ using
\texttt{CVX}~\cite{cvx,gb08}.
 These results, together with the classical bound on the convergence rate of the main
sequence of the fast gradient method \eqref{E:nesterovBound}, are summarized in
Figures~\ref{fig:exampleNesterovExample} and \ref{F:newNeterovBound}.
\medskip

\begin{figure}[H]
  \centerline{\includegraphics[scale=0.5]{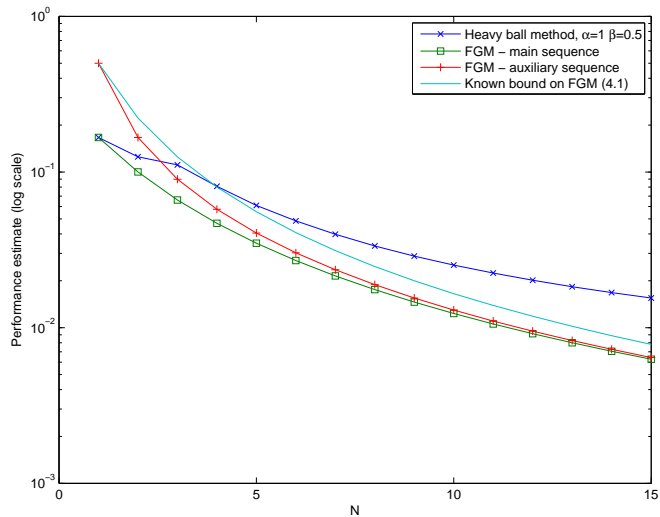}}
  \caption{The new bounds on the heavy ball and fast gradient methods.}
  \label{fig:exampleNesterovExample}
\end{figure}

\begin{figure}
\centerline{
\begin{tabular}{lllll}
N  &  Heavy Ball  &  FGM, main  & FGM, auxiliary  & Known bound on FGM \eqref{E:nesterovBound} \\
1 & LR$^2$/6.00 & LR$^2$/6.00 & LR$^2$/2.00 & LR$^2$/2.0=2LR$^2$/(1+1)$^2$\\
2 & LR$^2$/7.99 & LR$^2$/10.00 & LR$^2$/6.00 & LR$^2$/4.5=2LR$^2$/(2+1)$^2$\\
3 & LR$^2$/9.00 & LR$^2$/15.13 & LR$^2$/11.13 & LR$^2$/8.0=2LR$^2$/(3+1)$^2$\\
4 & LR$^2$/12.35 & LR$^2$/21.35 & LR$^2$/17.35 & LR$^2$/12.5=2LR$^2$/(4+1)$^2$\\
5 & LR$^2$/16.41 & LR$^2$/28.66 & LR$^2$/24.66 & LR$^2$/18.0=2LR$^2$/(5+1)$^2$\\
10 & LR$^2$/39.63 & LR$^2$/81.07 & LR$^2$/77.07 & LR$^2$/60.5=2LR$^2$/(10+1)$^2$\\
20 & LR$^2$/89.45 & LR$^2$/263.65 & LR$^2$/259.65 & LR$^2$/220.5=2LR$^2$/(20+1)$^2$\\
40 & LR$^2$/188.99 & LR$^2$/934.89 & LR$^2$/930.89 & LR$^2$/840.5=2LR$^2$/(40+1)$^2$\\
80 & LR$^2$/387.91 & LR$^2$/3490.22 & LR$^2$/3486.22 & LR$^2$/3280.5=2LR$^2$/(80+1)$^2$\\
160 & LR$^2$/785.68 & LR$^2$/13427.43 & LR$^2$/13423.43 & LR$^2$/12960.5=2LR$^2$/(160+1)$^2$\\
500 & LR$^2$/2476.11 & LR$^2$/127224.44 & LR$^2$/127220.32 & LR$^2$/125500.5=2LR$^2$/(500+1)$^2$\\
1000 & LR$^2$/4962.01 & LR$^2$/504796.99 & LR$^2$/504798.28 &
LR$^2$/501000.5=2LR$^2$/(1000+1)$^2$
\end{tabular}}
\caption{The new bounds for HBM and FGM versus the classical bound on Nesterov's algorithm.}
\label{F:newNeterovBound}
\end{figure}

Note that as far as the authors are aware, there is no known convergence rate result for the
HBM on the class of convex functions in $C^{1,1}_L$. As can be seen from the above results,
the numerical bound for HBM behaves slightly better than the gradient method (compare with the
explicit bound given in Theorem \ref{T:gradUpperBound}), but remains much slower than the fast
gradient scheme (FGM).

Considering the results on the FGM,
note that the numerical bounds for the main sequence of point $x_i$ and the corresponding
values at the auxiliary sequence $y_i$ of the fast gradient method are very similar and
perform slightly better than predicted by the classical bound \eqref{E:nesterovBound}.
To the best of our knowledge, the complexity of the auxiliary sequence is yet unknown,
thus these results encourage us to raise the following conjecture.
\begin{conjecture}\label{conj:xy}
Let $x_0,x_1,\dots$ and $y_1,y_2,\dots$ be the main and auxiliary sequences defined by FGM
(respectively), then $\{f(x_i)\}$ and $\{f(y_i)\}$ converge to the optimal value of the problem
with the same rate of convergence.
\end{conjecture}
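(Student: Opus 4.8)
The plan is to establish the conjecture for the Fast Gradient Method by leveraging the classical convergence analysis of Algorithm~\fastgradalg \cite{nest83,beck-tebo-fista09} directly, rather than by setting up and analyzing a separate performance estimation problem for the auxiliary sequence. (One could instead, in the spirit of the present paper, attempt to exhibit an explicit feasible point for the semidefinite dual \eqref{P:foprd} corresponding to bounding $f(y_N)-f^\ast$ in place of $f(x_N)-f^\ast$, mimicking the derivation of Theorem~\ref{T:gradUpperBound}; I expect that route to be substantially harder to carry through analytically, with the numerics of Figure~\ref{F:newNeterovBound} only serving to guide the guess.) Two facts from the standard analysis are taken as given. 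First, \eqref{E:nesterovBound}, rewritten as $f(x_k)-f^\ast \leq \tfrac{LR^2}{2 t_k^2}$ using $t_k \geq \tfrac{k+1}{2}$. Second — a by-product of the very same telescoping argument — the vectors $a_k := t_k x_k - (t_k-1)x_{k-1} - x_\ast$ satisfy $\|a_k\| \leq R$ for every $k$, because the quantity $\tfrac{2}{L}t_k^2\bigl(f(x_k)-f^\ast\bigr) + \|a_k\|^2$ is nonincreasing in $k$ with value at most $\|x_0-x_\ast\|^2\le R^2$.

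I would then record three elementary observations. (i) \emph{Bounded iterates}: since $t_k\geq 1$, the identity $x_k = \tfrac{1}{t_k}(a_k+x_\ast) + \bigl(1-\tfrac{1}{t_k}\bigr)x_{k-1}$ writes $x_k$ as a convex combination of $x_{k-1}$ and $a_k+x_\ast$, and $\|(a_k+x_\ast)-x_\ast\|=\|a_k\|\le R$; induction on $k$, with base case $\|x_0-x_\ast\|\le R$, gives $\|x_k-x_\ast\|\le R$ for all $k$. (ii) Unfolding the defining recursion for $y_{k+1}$ in Algorithm~\fastgradalg, a one-line computation gives $y_{k+1}-x_k = \tfrac{1}{t_{k+1}}\bigl(a_k-(x_k-x_\ast)\bigr)$, hence $\|y_{k+1}-x_k\|\le \tfrac{2R}{t_{k+1}}$. (iii) Proposition~\ref{prop:lip} applied at $x=x_k$, $y=x_\ast$ (recall $f'(x_\ast)=0$) gives $\tfrac{1}{2L}\|f'(x_k)\|^2\le f(x_k)-f^\ast$, i.e.\ $\|f'(x_k)\|\le \tfrac{LR}{t_k}$.

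Combining the $L$-smoothness upper model of $f$ at the point $x_k$ with the three bounds above yields
\[
  f(y_{k+1})-f^\ast \;\leq\; \bigl(f(x_k)-f^\ast\bigr) + \langle f'(x_k),\, y_{k+1}-x_k\rangle + \tfrac{L}{2}\|y_{k+1}-x_k\|^2 \;\leq\; \frac{LR^2}{2 t_k^2} + \frac{2LR^2}{t_k t_{k+1}} + \frac{2LR^2}{t_{k+1}^2},
\]
which, using $t_k\geq\tfrac{k+1}{2}$, is $O(LR^2/k^2)$ — the same order as \eqref{E:nesterovBound} for the main sequence. Conversely, the descent step $x_k = y_k-\tfrac1L f'(y_k)$ together with the standard descent lemma gives $f(x_k)\le f(y_k)-\tfrac{1}{2L}\|f'(y_k)\|^2\le f(y_k)$, so $0\le f(x_k)-f^\ast\le f(y_k)-f^\ast$; since the $O(1/k^2)$ rate is order-optimal for $\{f(x_k)-f^\ast\}$, the two sequences cannot converge at genuinely different rates, which is exactly the assertion of the conjecture.

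The calculus above is routine once the right quantities are isolated; the real work, and the step I expect to be the main obstacle, is pinning down the auxiliary estimate $\|a_k\|\le R$ with correct constants and a clean treatment of the initial index $k=1$ (where $t_1-1=0$), either by quoting \cite{beck-tebo-fista09} precisely or by re-deriving the monotone potential from scratch in our notation. I would also flag a caveat: Figure~\ref{F:newNeterovBound} suggests the stronger claim that the two worst-case bounds are asymptotically \emph{equal}, whereas the argument above only gives the same order — the cross term $\tfrac{2LR^2}{t_k t_{k+1}}$ is itself $\Theta(1/k^2)$ — and closing that gap would presumably require the sharper semidefinite-relaxation route hinted at above, or a more delicate potential function.
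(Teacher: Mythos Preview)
The statement in question is labelled a conjecture in the paper and is \emph{not} proved there; it is motivated solely by the numerical evidence of Figure~\ref{F:newNeterovBound}. There is thus no ``paper's own proof'' to compare against.

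Your argument is correct and settles the conjecture in its natural reading (same order of convergence). The three auxiliary facts (i)--(iii) all check out: the convex-combination identity for $x_k$ in (i) is an exact rewriting of the definition of $a_k$ (valid since $t_k\ge 1$); the identity $y_{k+1}-x_k=\tfrac{1}{t_{k+1}}\bigl(a_k-(x_k-x_\ast)\bigr)$ in (ii) follows in one line from $a_k=(x_k-x_\ast)+(t_k-1)(x_k-x_{k-1})$; and (iii) is Proposition~\ref{prop:lip} at $(x_k,x_\ast)$. The monotone potential $\tfrac{2}{L}t_k^2\bigl(f(x_k)-f^\ast\bigr)+\|a_k\|^2\le\|x_0-x_\ast\|^2$ is precisely the estimate underlying \eqref{E:nesterovBound} in \cite{beck-tebo-fista09}, including the base case $k=1$ (where $t_1=1$, $a_1=x_1-x_\ast$, and the inequality reduces to the standard one-step descent estimate for $x_1=x_0-\tfrac{1}{L}f'(x_0)$). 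The smoothness upper model then gives $f(y_{k+1})-f^\ast=O(LR^2/k^2)$ as you write, and together with $f(y_k)\ge f(x_k)$ and the known lower complexity bound this pins both worst-case rates at exactly $\Theta(LR^2/k^2)$.

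Your closing caveat is well placed: the argument delivers the same \emph{order}, not asymptotically equal worst-case constants, whereas the table in Figure~\ref{F:newNeterovBound} suggests the latter (the gap between the main and auxiliary columns appears to be exactly $4$, independent of $N$). Whether the authors intended the weak sense you have established or this stronger quantitative statement is not made precise in the paper; sharpening the constant would indeed seem to require the PEP/SDP route you allude to rather than the potential-function argument.
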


\section{A Best Performing Algorithm: Optimal Step Sizes for The Algorithm Class \mainalg}\label{S:optalg}

We now consider the problem of finding the ``best" performing algorithm of the form \mainalg
with respect to the new bounds. Namely, we consider the problem of minimizing
$\val\eqref{P:fopr}$, the optimal value of \eqref{P:fopr},
with respect to the step sizes $h:=(h^{(i)}_k)_{0\leq k<i \leq N}$
defining the algorithm \mainalg, and which are now considered as unknown variables in
\mainalg.


We denote by  $\tilde{A}_{i,j}(h)$ and $\tilde{D}_i(h)$, the matrices given in
\eqref{E:defmatrices},  which are functions of the algorithm step sizes $h$. The resulting
bound derived in Proposition \ref{numb} is thus a function of $h$, and the problem of
minimizing $\val \eqref{P:foprd}$ with respect to the step sizes $h$ thus consists of solving
the following bilinear problem: {\small
\begin{align*}
    \begin{aligned}
    \min_{h,\lambda, \tau,t}\ \left \{ \frac{1}{2} t :
           \begin{pmatrix}
                \sum_{i=1}^N \lambda_i  \tilde{A}_{i-1,i}(h)  + \sum_{i=0}^N \tau_i  \tilde{D}_i(h)  &\frac{1}{2} \tau \\
                \frac{1}{2} \tau^T      & \frac{1}{2} t
        \end{pmatrix} \succeq 0,
         (\lambda, \tau) \in \tilde{\Lambda} \right\},
    \end{aligned}
    \tag{BIL}\label{P:pblin}
\end{align*}
} with $\tilde{\Lambda}$ defined as in \eqref{E:duallinearconstraints}.

Note that
the feasibility of \eqref{P:pblin} follows from
the proof of Theorem~\ref{T:gradLowBound}, where an explicit feasible point is given to \eqref{P:dualgpr},
which is a special instance of \eqref{P:pblin} when the steps $(h^{(i)}_k)$ are chosen as in the gradient method.

From the definition of the matrices $\tilde{A}_{i,j}(h)$ and $\tilde{D}_i(h)$, we get
\begin{align*}
    \sum_{i=1}^N \lambda_i  \tilde{A}_{i-1,i}(h)  + \sum_{i=0}^N \tau_i  \tilde{D}_i(h)
    &= \frac{1}{2}\sum_{i=1}^{N} \lambda_i (u_{i-1}-u_i)(u_{i-1}-u_i)^T +\frac{1}{2}\sum_{i=0}^{N}
    \tau_i u_i u_i^T  \\&\quad+  \frac{1}{2}\sum_{i=1}^{N} \sum_{k=0}^{i-1}
    \left( \lambda_i h_k^{(i)}+\tau_i \sum_{t=k+1}^{i} h_k^{(t)}\right) (u_i u_k^T + u_k
    u_i^T).
\end{align*}
Introducing the new variables:
\begin{equation}\label{E:definex}
    r_{i,k}=  \lambda_i h_k^{(i)}+\tau_i \sum_{t=k+1}^{i} h_k^{(t)}, \quad i=1,\dots,N,\ k=0,\dots,i-1
\end{equation}
and denoting $r=(r_{i,k})_{0\leq k<i \leq N}$, we obtain the following \emph{linear} SDP
relaxation of \eqref{P:pblin}:
\begin{align*}
    \begin{aligned}
    \min_{r,\lambda, \tau,t}\ \left\{ \frac{1}{2} t :
            \begin{pmatrix}
                S(r,\lambda,\tau)  &\frac{1}{2} \tau \\
                \frac{1}{2} \tau^T      & \frac{1}{2} t
        \end{pmatrix} \succeq 0, \
        (\lambda, \tau) \in \tilde{\Lambda} \right\},
    \end{aligned}
    \tag{LIN}\label{P:plin}
\end{align*}
where
\begin{align*}
     S(r,\lambda,\tau)
    &= \frac{1}{2}\sum_{i=1}^{N} \lambda_i (u_{i-1}-u_i)(u_{i-1}-u_i)^T +\frac{1}{2}\sum_{i=0}^{N} \tau_i u_i u_i^T  +  \frac{1}{2}\sum_{i=1}^{N} \sum_{k=0}^{i-1} r_{i,k} (u_i u_k^T + u_k u_i^T).
\end{align*}

This convex SDP can now be efficiently solved by numerical methods. As the following theorem
shows, its solution can be used to construct a solution for \eqref{P:pblin} with optimal step
sizes $h$.
\begin{theorem}
Suppose $(r^\ast,\lambda^\ast,\tau^\ast,t^\ast)$ is an optimal solution for (LIN),
then $(h,\lambda^\ast,\tau^\ast,t^\ast)$ is an optimal solution for \eqref{P:pblin}, where
$h=(h_k^{(i)})_{0\leq k<i \leq N}$ is defined by the following recursive rule
\begin{equation}\label{E:solvebil}
    h_k^{(i)} = \begin{cases}
            \frac{\tau_i^\ast \sum_{t=k+1}^{i-1} h_k^{(t)} -r_{i,k}^\ast }{\lambda_{i}^\ast} & \lambda_i^\ast \neq 0\\
            0   & \lambda_i^\ast = 0
        \end{cases},  \quad i=1,\dots,N,\ k=0,\dots,i-1.
\end{equation}
\end{theorem}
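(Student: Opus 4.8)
The plan is to show that the map $(r,\lambda,\tau) \mapsto (h,\lambda,\tau)$ defined by \eqref{E:solvebil} together with the reverse map $(h,\lambda,\tau)\mapsto (r,\lambda,\tau)$ defined by \eqref{E:definex} establishes an equivalence between the feasible sets of \eqref{P:plin} and \eqref{P:pblin} that preserves the objective value $\tfrac12 t$. Since the objectives are identical and depend only on $t$, it then suffices to prove two things: (i) if $(r^\ast,\lambda^\ast,\tau^\ast,t^\ast)$ is feasible for \eqref{P:plin}, then the point $(h,\lambda^\ast,\tau^\ast,t^\ast)$ with $h$ given by \eqref{E:solvebil} is feasible for \eqref{P:pblin}, and (ii) the optimal values satisfy $\val\eqref{P:plin} \le \val\eqref{P:pblin}$ (the reverse inequality $\val\eqref{P:plin}\ge\val\eqref{P:pblin}$ being immediate, since \eqref{P:plin} is a relaxation of \eqref{P:pblin} obtained by the substitution \eqref{E:definex}, so any feasible $(h,\lambda,\tau,t)$ for \eqref{P:pblin} yields a feasible point of \eqref{P:plin} with the same $t$). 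Combining (i) and (ii) with optimality of $(r^\ast,\lambda^\ast,\tau^\ast,t^\ast)$ gives that $(h,\lambda^\ast,\tau^\ast,t^\ast)$ is optimal for \eqref{P:pblin}.

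The core of the argument is a verification that the recursion \eqref{E:solvebil} inverts the relation \eqref{E:definex}, in the sense that the $h$ it produces satisfies $r_{i,k}^\ast = \lambda_i^\ast h_k^{(i)} + \tau_i^\ast \sum_{t=k+1}^{i} h_k^{(t)}$ for all $0\le k<i\le N$. First I would rearrange \eqref{E:definex}: isolating the $t=i$ term in the sum gives $r_{i,k} = \lambda_i h_k^{(i)} + \tau_i h_k^{(i)} + \tau_i\sum_{t=k+1}^{i-1} h_k^{(t)}$, but we must be careful — in \eqref{E:definex} the inner sum runs $t=k+1,\dots,i$, so it includes $h_k^{(i)}$; hence $r_{i,k}=(\lambda_i+\tau_i)h_k^{(i)}+\tau_i\sum_{t=k+1}^{i-1}h_k^{(t)}$ when $\lambda_i+\tau_i\ne 0$. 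Then I would invoke the linear constraints $(\lambda,\tau)\in\tilde\Lambda$ from \eqref{E:duallinearconstraints}: for $i<N$ we have $\lambda_i-\lambda_{i+1}+\tau_i=0$, and for $i=N$ we have $\lambda_N+\tau_N=1$, together with $\tau_0=\lambda_1$. These relations are what makes \eqref{E:solvebil} well-posed — in particular they should give $\lambda_i+\tau_i=\lambda_{i+1}$ for $i<N$ and $\lambda_N+\tau_N=1$, so that $\lambda_i+\tau_i>0$ unless we are in the degenerate branch. Actually the cleaner route is a direct induction on $k$ running downward or on $i$: assuming the identity $r_{i,k}^\ast=\lambda_i^\ast h_k^{(i)}+\tau_i^\ast\sum_{t=k+1}^{i}h_k^{(t)}$ holds for all indices already computed, solving for $h_k^{(i)}$ in the case $\lambda_i^\ast\ne 0$ yields exactly the first branch of \eqref{E:solvebil} (note the upper limit $i-1$ there, since the $t=i$ term is the unknown $h_k^{(i)}$ multiplied by $\lambda_i^\ast$, not by $\lambda_i^\ast+\tau_i^\ast$ — wait, one must double-check whether the coefficient of $h_k^{(i)}$ is $\lambda_i^\ast$ alone or $\lambda_i^\ast+\tau_i^\ast$; reconciling \eqref{E:definex} and \eqref{E:solvebil} pins down the precise bookkeeping). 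The degenerate case $\lambda_i^\ast=0$ needs separate handling: then the constraint $\tau_i^\ast\sum_{t=k+1}^{i}h_k^{(t)}=r_{i,k}^\ast$ must still hold, and setting $h_k^{(i)}=0$ works provided the partial sum $\tau_i^\ast\sum_{t=k+1}^{i-1}h_k^{(t)}$ already equals $r_{i,k}^\ast$; establishing this is the delicate point, and it should follow from feasibility of $r^\ast$ in \eqref{P:plin} combined with the structure of $\tilde\Lambda$ (e.g. $\lambda_i^\ast=0$ forces $\lambda_1^\ast=\dots=\lambda_i^\ast=0$ by the monotonicity-type relations in $\tilde\Lambda$, which in turn constrains $r^\ast$).

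Once the inversion identity is in hand, feasibility transfers automatically: the matrix $S(r^\ast,\lambda^\ast,\tau^\ast)$ appearing in \eqref{P:plin} is, by the computation displayed just before \eqref{E:definex}, literally equal to $\sum_{i=1}^N\lambda_i^\ast\tilde A_{i-1,i}(h)+\sum_{i=0}^N\tau_i^\ast\tilde D_i(h)$ once $h$ satisfies \eqref{E:definex} with the $r_{i,k}^\ast$. Therefore the big PSD block constraint of \eqref{P:plin} is identical to that of \eqref{P:pblin}, and $(\lambda^\ast,\tau^\ast)\in\tilde\Lambda$ is common to both; so $(h,\lambda^\ast,\tau^\ast,t^\ast)$ is feasible for \eqref{P:pblin} with objective $\tfrac12 t^\ast=\val\eqref{P:plin}$. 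Since $\val\eqref{P:pblin}\ge\val\eqref{P:plin}$ always (relaxation), we get $\val\eqref{P:pblin}=\val\eqref{P:plin}$ and the constructed point attains it, hence is optimal. The main obstacle, as flagged above, is purely the careful index bookkeeping in matching \eqref{E:definex} with \eqref{E:solvebil} — getting the upper summation limits and the coefficient ($\lambda_i^\ast$ versus $\lambda_i^\ast+\tau_i^\ast$) exactly right — and cleanly disposing of the $\lambda_i^\ast=0$ branch using the sign/monotonicity structure encoded in $\tilde\Lambda$; everything else is a direct substitution.
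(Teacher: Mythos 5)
Your skeleton coincides with the paper's: since \eqref{P:plin} is a relaxation of \eqref{P:pblin} (any \eqref{P:pblin}-feasible $(h,\lambda,\tau,t)$ maps via \eqref{E:definex} to a \eqref{P:plin}-feasible point with the same objective, so $\val\eqref{P:plin}\le\val\eqref{P:pblin}$), it suffices to exhibit a \eqref{P:pblin}-feasible point with objective $\tfrac12 t^\ast=\val\eqref{P:plin}$. (Your parenthetical in (ii) attaches the relaxation argument to the wrong inequality, but your closing paragraph has the directions right, so that is only an expository slip.) The substance of the proof, however, lies exactly in the two verifications you defer, and both are left genuinely open. First, the inversion: your worry about the coefficient is well founded and cannot be waved away as bookkeeping. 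Since the inner sum in \eqref{E:definex} runs up to $t=i$, the coefficient of $h_k^{(i)}$ in $r_{i,k}$ is $\lambda_i^\ast+\tau_i^\ast$, which by the relations in $\tilde\Lambda$ equals $\lambda_{i+1}^\ast$ for $i<N$ and $1$ for $i=N$; the recursion that literally inverts \eqref{E:definex} is therefore $h_k^{(i)}=\bigl(r_{i,k}^\ast-\tau_i^\ast\sum_{t=k+1}^{i-1}h_k^{(t)}\bigr)/(\lambda_i^\ast+\tau_i^\ast)$, and matching this against the printed \eqref{E:solvebil} requires resolving both a sign and an index shift in the denominator. This reconciliation is precisely where the proof lives (the paper itself only asserts that \eqref{E:solvebil} satisfies \eqref{E:definex} when all $\lambda_i^\ast\neq0$), so a complete argument must actually perform it rather than flag it.

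Second, and more importantly, the degenerate branch. You correctly note that $\lambda_m^\ast=0$ forces $\lambda_1^\ast=\dots=\lambda_m^\ast=0$ and $\tau_0^\ast=\dots=\tau_{m-1}^\ast=0$ via the monotonicity encoded in $\tilde\Lambda$, and you correctly identify that one must then show \eqref{E:definex} remains consistent with $h_k^{(i)}=0$. But the mechanism that closes this case is absent from your proposal, and it is the one nontrivial idea in the paper's proof: the linear constraints force the diagonal of $S(r^\ast,\lambda^\ast,\tau^\ast)$ to reproduce the multipliers, $S_{i,i}=\tfrac12(\lambda_{i-1}^\ast+\lambda_i^\ast+\tau_{i-1}^\ast)=\lambda_i^\ast$ for $i=1,\dots,N$, so that $S_{1,1}=\dots=S_{m,m}=0$; since $S\succeq0$, a vanishing diagonal entry annihilates its entire row and column, which forces $r_{i,k}^\ast=0$ for all $i\le m$, and the corresponding equations \eqref{E:definex} then hold trivially with $h=0$. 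Your appeal to ``feasibility of $r^\ast$ combined with the structure of $\tilde\Lambda$'' points at the right ingredients, but without the identification of the diagonal of $S$ with $\lambda^\ast$ and the ensuing positive-semidefiniteness argument, the delicate point you flag remains unproved.
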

\begin{proof}
As (LIN) is a relaxation of \eqref{P:pblin}, it is enough to show that \eqref{P:pblin} can achieve the same
objective value. Let $(r^\ast,\lambda^\ast,\tau^\ast,t^\ast)$ be an optimal solution
for (LIN). If $\lambda_i^\ast \neq 0$ for all $1\leq i \leq N$, then~\eqref{E:solvebil}
satisfies all the equations in \eqref{E:definex} and therefore
$(h,\lambda^\ast,\tau^\ast,t^\ast)$ is feasible for \eqref{P:pblin}.

Suppose $\lambda^\ast_m=0$ for some $1\leq m \leq N$ and that $m$ is the maximal index with
this property. Then by the equality and non-negativity constraints in (LIN), we get that
$\lambda^\ast_1=\lambda^\ast_2=\dots=\lambda^\ast_m=0$ and
$\tau^\ast_0=\tau^\ast_1=\dots=\tau^\ast_{m-1}=0$.
Let $S:=S(r,\lambda^\ast,\tau^\ast)$, then by the positive semidefinite constraint in (LIN), we have $S \succeq 0$.
From the linear equalities connecting $\lambda$ and $\tau$ it follows that
\[
    S_{i,i}=\begin{cases}
            \frac{1}{2} (\lambda_1^\ast+\tau_0^\ast) = \lambda_1^\ast, &\quad i=1 \\
            \frac{1}{2} (\lambda_i^\ast+\lambda_{i-1}^\ast+\tau_{i-1}^\ast) = \lambda_i^\ast, &\quad i=2,\dots,N
        \end{cases},
\]
and we get that $S_{1,1}=\dots=S_{m,m}=0$. By the properties of positive semidefinite matrices we now get that $r_{i,k}^\ast=0$ for $i=1,\dots,m$ and $k=0,\dots,i-1$,
hence the set of equations \eqref{E:definex} with the chosen values of $h_k^{(i)}$ is consistent.
\end{proof}

The optimal value of (LIN) for various values of $N$ is summarized in Figure~\ref{F:optbound}.
The resulting new algorithm with the computed optimal step sizes $h_k^{(i)}$ is illustrated
for $N=5$ and given in Figure~\ref{F:optalg5}. As can be seen from these results, (compare
with Figure \ref{F:newNeterovBound}) the performance of the new algorithm is almost exactly
two times better than the performance of the fast gradient method.

\begin{figure}[h!]
\centerline{
\begin{tabular}{lll}
N  &  val(LIN) \\
1 & LR$^2$/8.00 \\
2 & LR$^2$/16.16 \\
3 & LR$^2$/26.53 \\
4 & LR$^2$/39.09 \\
5 & LR$^2$/53.80 \\
10 & LR$^2$/159.07 \\
20 & LR$^2$/525.09 \\
40 & LR$^2$/1869.22 \\
80 & LR$^2$/6983.13 \\
160 & LR$^2$/26864.04 \\
500 & LR$^2$/254482.61 \\
1000 & LR$^2$/1009628.17
\end{tabular}}
\caption{The value of the optimal value of (LIN) for various values of $N$.} \label{F:optbound}
\end{figure}

\begin{figure}[h!]
\begin{align*}
& x_{1}\leftarrow x_{0}+ \textstyle\frac{1.6180}{L} f^\prime(x_{0})\\
& x_{2}\leftarrow x_{1}+ \textstyle\frac{0.1741}{L} f^\prime(x_{0})+ \textstyle\frac{2.0194}{L} f^\prime(x_{1})\\
& x_{3}\leftarrow x_{2}+ \textstyle\frac{0.0756}{L} f^\prime(x_{0})+ \textstyle\frac{0.4425}{L} f^\prime(x_{1})+ \textstyle\frac{2.2317}{L} f^\prime(x_{2})\\
& x_{4}\leftarrow x_{3}+ \textstyle\frac{0.0401}{L} f^\prime(x_{0})+ \textstyle\frac{0.2350}{L} f^\prime(x_{1})+ \textstyle\frac{0.6541}{L} f^\prime(x_{2})+ \textstyle\frac{2.3656}{L} f^\prime(x_{3})\\
& x_{5}\leftarrow x_{4}+ \textstyle\frac{0.0178}{L} f^\prime(x_{0})+ \textstyle\frac{0.1040}{L} f^\prime(x_{1})+ \textstyle\frac{0.2894}{L} f^\prime(x_{2})+ \textstyle\frac{0.6043}{L} f^\prime(x_{3})+ \textstyle\frac{2.0778}{L} f^\prime(x_{4})
\end{align*}
\caption{A first-order  algorithm  with optimal step-sizes for $N=5$.} \label{F:optalg5}
\end{figure}

\section{Acknowledgements}
This work was initiated during our participation to the ``Modern Trends in Optimization and
Its Application'' program at IPAM, (UCLA), September-December 2010. We would like to thank
IPAM for their support and for the very pleasant and stimulating environment provided to us
during our stay. We would also like to thank Simi Haber, Ido Ben-Eliezer and Rani Hod for
their help in the proof of Lemma~\ref{L:positivedefinitea0a1}.

\appendix

\section{Proof of Lemma~\ref{L:positivedefinitea0a1}}
We now establish the positive definiteness of the matrices $S_0$ and $S_1$
given in \eqref{b0matrix} and \eqref{b1matrix}, respectively.

\subsection{$S_0 \succ 0$}
We begin by showing that $S_0$ is positive definite. Recall that
\[
    S_0=
    \begin{pmatrix}
        2 \lambda_1     & -\lambda_1            \\
        -\lambda_1          & 2 \lambda_2   & -\lambda_2                \\
            &       -\lambda_2          & 2 \lambda_3   & -\lambda_3                \\
                                    &        & \ddots        &\ddots &   \ddots\\
                                            & &                 &       -\lambda_{N-1}  & 2 \lambda_N & -\lambda_N \\
                                            & &                 &        &  -\lambda_N  & 1\\
    \end{pmatrix}
\]
for
\begin{align*}
    \lambda_i &= \frac{i}{2 N+1-i}, \qquad i=1,\dots,N.
\end{align*}

Let us look at $x^T S_0 x$ for any $x=(x_0,\dots,x_N)^T$:
\begin{align*}
    x^T S_0 x
        &= \sum_{i=0}^{N-1} 2\lambda_{i+1} x_i^2 -2\sum_{i=0}^{N-1} \lambda_{i+1} x_i x_{i+1} + x_N^2\\
        &= \sum_{i=0}^{N-1} \lambda_{i+1} (x_{i+1}-x_i)^2 +\lambda_1 x_0^2+\sum_{i=1}^{N-1} (\lambda_{i+1}-\lambda_i) x_i^2+(1-\lambda_N)x_N^2
\end{align*}
which is always positive for $x\neq 0$. We conclude that $S_0$ is positive definite.

\subsection{$S_1 \succ 0$}

We will show that $S_1$ is positive definite using Sylvester's criterion\footnote{Despite
the interesting structure of the matrix $S_1$, this proof is quite involved. A simpler proof
would be most welcome!}.

Recall that
\[
    S_1=
    \begin{pmatrix}
        2 \lambda_1         & \lambda_2-\lambda_1           &\dots          &\lambda_N-\lambda_{N-1}    &1-\lambda_N  \\
        \lambda_2-\lambda_1         & 2 \lambda_2                   &          &\lambda_N-\lambda_{N-1}    &1-\lambda_N \\

        \vdots & & \ddots & &\vdots \\
        \lambda_N-\lambda_{N-1}     & \lambda_N-\lambda_{N-1}   &          & 2\lambda_N    & 1-\lambda_N  \\
        1-\lambda_N         & 1-\lambda_N       & \dots         & 1-\lambda_N   & 1 \\
    \end{pmatrix}
\]
for
\begin{align*}
    \lambda_i &= \frac{i}{2 N+1-i}, \qquad i=1,\dots,N.
\end{align*}

\paragraph{A recursive expression for the determinants}
We begin by deriving a recursion rule for the determinant of matrices of the following form:
\[
    M_k=
    \begin{pmatrix}
        d_0 &   a_1 &   a_2 &       \dots   &   a_{k-1} &   a_k \\
        a_1 &   d_1 &   a_2 &               &   a_{k-1} &   a_k \\
        a_2 &   a_2 &   d_2 &                   &   a_{k-1} &   a_k \\
        \vdots  &   &   &   \ddots  & & \vdots\\
        a_{k-1} &   a_{k-1} & a_{k-1}   &                   &   d_{k-1} &   a_k \\
        a_k &   a_k &   a_k &   \dots       &   a_k     &   d_k
    \end{pmatrix}.
\]

To find the determinant of $M_k$,
subtract the one before last row multiplied by $\frac{a_{k}}{a_{k-1}}$ from the last row: the last row becomes
\[
    (0,\dots,0,a_k-\frac{a_k}{a_{k-1}}d_{k-1},d_k-\frac{a_k}{a_{k-1}}a_{k}).
\]
Expanding the determinant along the last row we get
\[
    \det M_k = (d_k-\frac{a_k}{a_{k-1}}a_{k}) \det M_{k-1}-(a_k-\frac{a_k}{a_{k-1}}d_{k-1}) \det (M_k)_{k,k-1}
\]
where $(M_k)_{k,k-1}$ denotes the $k,k-1$ minor:
\[
    (M_k)_{k,k-1}=
    \begin{pmatrix}
        d_0 &   a_1 &   a_2 &       \dots   &   a_{k-2} &   a_k \\
        a_1 &   d_1 &   a_2 &               &   a_{k-2} &   a_k \\
        a_2 &   a_2 &   d_2 &                   &   a_{k-2} &   a_k \\
        \vdots  &   &   &   \ddots  \\
        a_{k-2} &   a_{k-2} & a_{k-2}   &                   &   d_{k-2} &   a_k \\
        a_{k-1} &   a_{k-1} & a_{k-1}   &                   &   a_{k-1} &   a_k
    \end{pmatrix}
    .
\]
If we multiply the last column of $(M_k)_{k,k-1}$ by $\frac{a_{k-1}}{a_k}$
we get a matrix that is different from $M_{k-1}$ by only the corner element.
Thus by basic determinant properties we get that
\[
    \frac{a_{k-1}}{a_k}\det (M_k)_{k,k-1} = \det M_{k-1}+(a_{k-1}-d_{k-1}) \det M_{k-2}.
\]
Combining these two results, we have found the following recursion rule for $\det M_k$, $k\geq 2$:
\begin{align*}
    &\det M_k = (d_k-\frac{a_k}{a_{k-1}}a_{k}) \det M_{k-1} \\&\qquad -(a_k-\frac{a_k}{a_{k-1}}d_{k-1})\left(\frac{a_k}{a_{k-1}} \det M_{k-1}+(a_{k}-\frac{a_k}{a_{k-1}}d_{k-1}) \det M_{k-2}\right) \\
            &= \left((d_k-\frac{a_k}{a_{k-1}}a_{k})-(a_k-\frac{a_k}{a_{k-1}}d_{k-1}) \frac{a_k}{a_{k-1}}\right) \det M_{k-1} -\left(a_k-\frac{a_k}{a_{k-1}}d_{k-1}\right)^2\det M_{k-2}
\end{align*}
or
\begin{equation}\label{E:mrecursion}
    \det M_k= \left(d_k-\frac{2 a_k^2}{a_{k-1}}+\frac{a_k^2 d_{k-1}}{a_{k-1}^2} \right) \det M_{k-1}-a_k^2\left(1-\frac{d_{k-1}}{a_{k-1}}\right)^2 \det M_{k-2} .
\end{equation}
Obviously, the recursion base cases are given by
\begin{align*}
    & \det M_0 = d_0, \\
    & \det M_1 = d_0 d_1-a_1^2.
\end{align*}

\paragraph{Closed form expressions for the determinants}
Going back to our matrix, $S_1$, by choosing
\begin{align*}
    & d_i = 2\frac{i+1}{2 N-i},\quad i=0,\dots,N-1 \\
    & d_N = 1 \\
    & a_i = \frac{i+1}{2 N-i}-\frac{i}{2 N+1-i},\quad i=1,\dots,N-1\\
    & a_N = 1-\frac{N}{N+1} = \frac{1}{N+1},
\end{align*}
we get that $M_k$ is the $k+1$'th leading principal minor of the matrix $S_1$.
The recursion rule~\eqref{E:mrecursion} can now be solved for this choice of $a_i$ and $d_i$. The solution is given by:
\begin{align}
    \det M_k =& \frac{(2N+1)^2}{(2N-k)^2} \left( 1 +\sum_{i=0}^k \frac{2N-2k-1}{2N+4 N i - 2 i^2+1}\right) \prod_{i=0}^{k} \frac{2N+4Ni-2i^2+1}{(2N+1-i)^2},  \label{E:mkdet}
\end{align}
for $k=0,\dots,N-1$, and
\begin{align}
    \det M_N = \det L_1
        =& \frac{(2N+1)^2}{(N+1)^2}\prod_{i=0}^{N-1} \frac{2N+4Ni-2i^2+1}{(2N+1-i)^2} \label{E:mndet}.
\end{align}

\paragraph{Verification}
We now proceed to verify the expressions \eqref{E:mkdet} and \eqref{E:mndet} given above. We will show that these expressions satisfy the recursion rule \eqref{E:mrecursion}
and the base cases of the problem. We begin by verifying the base cases:
\begin{align*}
    \det M_0 & = \frac{(2N+1)^2}{(2N)^2}\left( 1 +\frac{2N-1}{2N+1}\right) \frac{1}{2N+1}= \frac{1}{N} = d_0,
\end{align*}
\begin{align*}
    \det M_1 & = \frac{(2N+1)^2}{(2N-1)^2} \left( 1 +\frac{2N-3}{2N+1}+ \frac{2N-3}{6N -1}\right) \frac{1}{2N+1} \frac{6N-1}{(2N)^2}  \\
        &= \frac{28N^2-20N-1}{4N^2(2N-1)^2} = \frac{4}{N(2N-1)}-\left(\frac{2}{2N-1}-\frac{1}{2N}\right)^2= d_0 d_1-a_1^2.
\end{align*}
Now suppose $2 \leq k\leq N$. Denote
\begin{align*}
    \alpha_k
            &= d_k-\frac{2 a_k^2}{a_{k-1}}+\frac{a_k^2 d_{k-1}}{a_{k-1}^2} \\
            &=\begin{cases}
                4\frac{(2N+1)k-k^2-1}{(2N-k)^2},            &k<N\\
                3\frac{2N^2+2N-1}{(2N+1)^2},    &k=N
            \end{cases} \\
    \beta_k &= a_k^2\left(1-\frac{d_{k-1}}{a_{k-1}}\right)^2 \\
                &=\begin{cases}
                    \frac{(4 kN-2N-2k^2+4k-1)^2}{(2N-k)^2(2N-k+1)^2}, &k<N\\
                    \frac{(2N^2+2N-1)^2}{(N+1)^2(2N+1)^2}, &k=N
                \end{cases},
\end{align*}
then the recursion rule \eqref{E:mrecursion} can be written as
\begin{align*}
    & \det M_k= \alpha_k \det M_{k-1} - \beta_k \det M_{k-2}.
\end{align*}
Further denote
\begin{align*}
    f_i &= \frac{(2N+1)^2}{(2N-i)^2}, \quad i=0,\dots,N-1, \\
    g_i &= 2N-2i-1, \quad i=0,\dots,N-1,\\
    x_i &= \frac{1}{2N+4 N i - 2 i^2+1}, \quad i=0,\dots,N-1, \\
    y_i &= \frac{2N+4Ni-2i^2+1}{(2N+1-i)^2}, \quad i=0,\dots,N-1,
\end{align*}
then the solution \eqref{E:mkdet} becomes
\begin{align*}
    \det M_k =& f_k \left(1+g_k \sum_{i=0}^{k} x_i\right) \prod_{i=0}^{k} y_i,
\end{align*}
and \eqref{E:mndet} becomes
\begin{align*}
    \det M_N =& \frac{(2N+1)^2}{(N+1)^2} \prod_{i=0}^{N-1} y_i.
\end{align*}

Substituting \eqref{E:mkdet} in the RHS of \eqref{E:mrecursion} we get that for $k=2,\dots,N$
\begin{align*}
     & \alpha_k \det M_{k-1} - \beta_k \det M_{k-2}\\
        &= \alpha_k f_{k-1} \left(1+g_{k-1}\sum_{i=0}^{k-1} x_i\right) \prod_{i=0}^{k-1} y_i - \beta_k f_{k-2}\left(1+g_{k-2}\sum_{i=0}^{k-2} x_i \right) \prod_{i=0}^{k-2} y_i\\
        &= \left( \alpha_k f_{k-1} \left(1+g_{k-1} x_{k-1}+g_{k-1}\sum_{i=0}^{k-2} x_i\right)-\frac{\beta_k}{y_{k-1}} f_{k-2}-\frac{\beta_k}{y_{k-1}} f_{k-2} g_{k-2}\sum_{i=0}^{k-2} x_i \right) \prod_{i=0}^{k-1} y_i\\
        &= \left( \alpha_k f_{k-1}(1+ g_{k-1} x_{k-1}) -\frac{\beta_k}{y_{k-1}} f_{k-2}+\left( \alpha_k f_{k-1} g_{k-1}-\frac{\beta_k}{y_{k-1}} f_{k-2} g_{k-2}\right) \sum_{i=0}^{k-2} x_i\right) \prod_{i=0}^{k-1} y_i.
\end{align*}
It is straightforward (although somewhat involved) to verify that for $k<N$
\begin{align*}
    & \alpha_k f_{k-1} (1+ g_{k-1} x_{k-1}) -\frac{\beta_k}{y_{k-1}} f_{k-2} = f_k y_k (1+g_k x_{k-1}+g_k x_k),
\end{align*}
and
\begin{align*}
    & \alpha_k f_{k-1}g_{k-1}-\frac{\beta_k}{y_{k-1}} f_{k-2} g_{k-2} = f_k g_k y_k.
\end{align*}
We therefore get
\begin{align*}
    & \alpha_k \det M_{k-1} -\beta_k \det M_{k-2} \\
    & = \left( f_k y_k(1+g_k x_{k-1}+g_k x_k)+ f_k g_k y_k \sum_{i=0}^{k-2} x_i\right) \prod_{i=0}^{k-1} y_i\\
    & = f_k \left( 1+g_k \sum_{i=0}^{k} x_i\right) \prod_{i=0}^{k} y_i\\
    & = \det M_k,
\end{align*}
and thus \eqref{E:mkdet} satisfies \eqref{E:mrecursion}. It is also possible to show that
\begin{align*}
    & \alpha_N f_{N-1}(1+ g_{N-1} x_{N-1}) -\frac{\beta_N}{y_{N-1}} f_{N-2} = \frac{(2N+1)^2}{(N+1)^2}, \\
    & \alpha_N f_{N-1} g_{N-1}-\frac{\beta_N}{y_{N-1}} f_{N-2} g_{N-2} = 0,
\end{align*}
thus, for $k=N$
\begin{align*}
    & \alpha_N \det M_{N-1} -\beta_N \det M_{N-2} \\
    & = \frac{(2N+1)^2}{(N+1)^2} \prod_{i=0}^{N-1} y_i  \\
    & = \det M_N,
\end{align*}
and the expression \eqref{E:mndet} is also verified.

\comment{
Mathematica code to verify these statements:

\[Lambda][i_] := i/(2 n + 1 - i);
a[k_] := (\[Lambda][k + 1] - \[Lambda][k]);
d[k_] := 2 \[Lambda][k + 1];
d[n] = 1;
a[n] = 1 - \[Lambda][n];
\[Alpha][k_] := (d[k] - (2 a[k]^2)/a[k - 1] + (a[k]^2 d[k - 1])/a[k - 1]^2);
\[Beta][k_] := a[k]^2 (1 - d[k - 1]/a[k - 1])^2;
f[i_] := (2 n + 1)^2/(2 n - i)^2;
g[i_] := 2 n - 2 i - 1;
x[i_] := 1/(2 n + 4 n i - 2 i^2 + 1);
y[i_] := (2 n + 4 n i - 2 i^2 + 1)/(2 n + 1 - i)^2;

Simplify[\[Alpha][k] f[k - 1] (1 + g[k - 1] x[k - 1]) - \[Beta][k]/ y[k - 1] f[k - 2] - f[k] y[k] (1 + g[k] x[k - 1] + g[k] x[k])]
Simplify[\[Alpha][k] f[k - 1] g[k - 1] - \[Beta][k]/ y[k - 1] f[k - 2] g[k - 2] - f[k] g[k] y[k]]
Simplify[\[Alpha][n] f[n - 1] (1 + g[n - 1] x[n - 1]) - \[Beta][n]/ y[n - 1] f[n - 2] - (2 n + 1)^2/(n + 1)^2]
Simplify[\[Alpha][n] f[n - 1] g[n - 1] - \[Beta][n]/ y[n - 1] f[n - 2] g[n - 2]]
}

To complete the proof, note that the closed form expressions for $\det M_k$ consist of sums
and products of positive values, hence $\det M_k$ is positive, and thus follows from
Sylvester's criterion that $S_1$ is positive definite.

\bibliographystyle{plain}
\bibliography{bib}

\end{document}